\newtheorem{thm}{Theorem}[section]
\newtheorem{lem}[thm]{Lemma}
\newtheorem{prop}[thm]{Proposition}
\newtheorem{defn}[thm]{Definition}
\begin{document}
\title[Some remarks on semisimple Leibniz algebras]
{Some remarks on semisimple Leibniz algebras}
\author{S.  G\'{o}mez - Vidal, B.A. Omirov and A.Kh. Khudoyberdiyev}
\address{[Vidal G\'{o}mez] Dpto. Matem\'{a}tica Aplicada I.
Universidad de Sevilla. Avda. Reina Mercedes, s/n. 41012 Sevilla.
(Spain)} \email{samuel.gomezvidal@gmail.com}
\address{[B.A. Omirov -- A.Kh. Khudoyberdiyev] Institute of Mathematics and Information Technologies
 of Academy of Uzbekistan, 29, Do'rmon yo'li street., 100125, Tashkent (Uzbekistan)}
\email{omirovb@mail.ru --- khabror@mail.ru}


\begin{abstract}
From the Levi's Theorem it is known that every finite dimensional
Lie algebra over a field of characteristic zero is decomposed into
semidirect sum of solvable radical and semisimple subalgebra.
Moreover, semisimple part is the direct sum of simple ideals. In
\cite{Bar} the Levi's theorem is extended to the case of Leibniz
algebras. In the present paper we investigate the semisimple
Leibniz algebras and we show that the splitting theorem for
semisimple Leibniz algebras is not true. Moreover, we consider
some special classes of the semisimple Leibniz algebras and find a
condition under which they decompose into direct sum of simple
ideals.

\end{abstract}
 \maketitle

\textbf{Mathematics Subject Classification 2010}: 17A32, 17A60, 17B20.

\textbf{Key Words and Phrases}: Lie algebras, Leibniz algebras,
simplicity, semisimplicity, solvable radical, Levi's Theorem.

\section{Introduction}

Due to Levi's theorem the study of finite dimensional Lie algebra
over a field of characteristic zero is reduced to the study of
solvable and semisimple algebras \cite{Jac}. From results of
\cite{Mal} we can conclude that the main part of solvable Lie
algebra consists of the maximal nilpotent ideal. The
classification of semisimple Lie algebras has been known since the
works of Cartan and Killing \cite{Jac}. According to the
Cartan-Killing theory the semisimple Lie algebra can be
represented as a direct sum of simple Lie algebras.

The notion of Leibniz algebras have been first introduced by Loday
in \cite{loday} as a non-antisymmetric generalization of Lie
algebras. The last $20$ years the theory of Leibniz algebras has
been actively studied and many results of the theory of Lie
algebras have been extended to Leibniz algebras. Until now a lot
of works are devoted to the description of finite-dimensional
nilpotent Leibniz algebras  \cite{AOR1}--\cite{Ayu}. However,
simple and semisimple parts are not studied. It is because the
notions of simple and semisimple Leibniz algebras are not agreed
with corresponding the classical notions. In fact, in non-Lie
Leibniz algebra $L$ there is non-trivial ideal, which is a
subspace spanned by squares of elements of the algebra $L$
(denoted by $I$). Therefore, in \cite{Dzum} the notion of simple
Leibniz algebra has been suggested, namely, a Leibniz algebra $L$
is called {\it simple} if it contains only ideals $\{0\},$ $I,$
$L$ and square of the algebra is not equal to the ideal $I.$ In
the case when the Leibniz algebra is Lie algebra, the ideal $I$ is
trivial and this definition agrees with the definition of simple
Lie algebra. Obviously, the quotient algebra by ideal $I$ of
simple Leibniz algebra is simple Lie algebra, but the converse is
not true.

From an analogue of Levi's theorem for Leibniz algebras \cite{Bar}
the description of simple Leibniz algebras immediately follows. In
the present paper we present the same description but with another
proof. Moreover, we introduce a notion of a semisimple Leibniz
algebra (algebra whose solvable radical is coincided with $I$) and
investigate such algebras. Note that Leibniz algebra is semisimple
if and only if quotient Lie algebra is semisimple. In particular,
we find some sufficient conditions under which an analogue of
splitting theorem for semisimple Leibniz algebras is true. In
addition, an example of semisimple Leibniz algebra, which is not
decomposed into a direct sum of simple Leibniz ideals, is given.

Actually, there exist semisimple Leibniz algebras (which are not
simple in general) for which the quotient algebra is simple Lie
algebra. So, we call such algebras {\it Lie-simple} Leibniz
algebras. According to this definition the natural question arises
- whether an arbitrary finite dimensional semisimple Leibniz
algebra is a direct sum of Lie-simple Leibniz algebras. We show
that the answer to the question is also negative, we give a
counterexample.

Finally, for some special classes of semisimple Leibniz algebras
we give sufficient conditions under which these classes decomposed
into a direct sum of the Lie-simple Leibniz algebras. More
precisely, we consider a semisimple Leibniz algebra consisting of
the direct sum of the classical three dimensional simple Lie
algebras $sl_2.$

In this paper all algebras and vector spaces are considered over a field of characteristic zero and finite dimensional.

We shall use symbols: $+$, $\oplus$ and $\dot{+}$ for notations of
direct sum of vector spaces,  direct and semidirect sums of
algebras, respectively.

\section{Preliminaries}

In this section we give necessary definitions and preliminary results.

\begin{defn} An algebra $(L,[\cdot,\cdot])$ over a field $F$ is called a Leibniz algebra if for any $x,y,z\in L$ the so-called Leibniz identity
$$[x,[y,z]]=[[x,y],z] - [[x,z],y]$$ holds true.
\end{defn}

For a given Leibniz algebra $L$ we define derived sequence as follows:
$$
L^1=L,\quad L^{[n+1]}=[L^{[n]},L^{[n]}], \quad n \geq 1.
$$

\begin{defn} A Leibniz algebra $L$ is called
solvable, if there exists  $m\in\mathbb N$ such that $L^{[m]}=0.$
The minimal number $m$ with this property is called index of
solvability of the algebra $L.$
\end{defn}

Let us recall Levi's theorem for Lie algebras.

\begin{thm}\cite{Jac}. For an arbitrary finite dimensional Lie algebra  $B$
over a field of characteristic zero with solvable radical $R,$
there exists semisimple subalgebra $G$ such that $B = G \dot{+}R.$
\end{thm}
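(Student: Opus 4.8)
The plan is to prove the theorem by induction on $\dim R$, reducing first to the case in which the radical $R$ is abelian and then disposing of that case by a complete-reducibility argument. The base case $R=0$ is immediate, with $G=B$. For the inductive step assume $R\neq 0$. If $R$ is not abelian, then $[R,R]$ is a nonzero ideal of $B$ --- it is characteristic in the ideal $R$, since for $x\in B$ the restriction of $\mathrm{ad}\,x$ to $R$ is a derivation of $R$ --- and, as $R$ is solvable and nonzero, $0<\dim[R,R]<\dim R$. In the quotient $\bar B=B/[R,R]$ the solvable radical is $\bar R=R/[R,R]$, of strictly smaller dimension, so the inductive hypothesis produces a semisimple subalgebra $\bar G\subseteq\bar B$ with $\bar B=\bar G\,\dot{+}\,\bar R$. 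Taking $B^{*}$ to be the preimage of $\bar G$ under the projection $B\to\bar B$, one checks that $B^{*}$ is a subalgebra whose solvable radical is precisely $[R,R]$, again of dimension less than $\dim R$; a second application of the inductive hypothesis gives a semisimple $G\subseteq B^{*}$ with $B^{*}=G\,\dot{+}\,[R,R]$. Then $G\cap R$ is a solvable ideal of the semisimple algebra $G$, hence zero, and the dimension count $\dim G=\dim\bar G=\dim B-\dim R$ forces $B=G\,\dot{+}\,R$.

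It remains to treat the case $R\neq 0$ abelian. The adjoint action of $B$ makes $R$ a $B$-module, and because $R$ is abelian this action factors through the semisimple quotient $S:=B/R$; thus, writing $\pi\colon B\to S$ for the projection, the exact sequence $0\to R\to B\to S\to 0$ is an abelian extension of Lie algebras, classified by a class in $H^{2}(S,R)$. The crucial point is that $H^{2}(S,R)=0$ whenever $S$ is semisimple and $R$ is a finite-dimensional module; this is Whitehead's second lemma, which itself rests on Weyl's complete reducibility theorem for representations of semisimple Lie algebras. Hence the extension splits: there is a Lie algebra homomorphism $\sigma\colon S\to B$ with $\pi\circ\sigma=\mathrm{id}_{S}$, and $G:=\sigma(S)$ is the desired semisimple subalgebra, with $B=G\,\dot{+}\,R$.

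The main obstacle is precisely this abelian case, and inside it the vanishing of $H^{2}(S,R)$: in a genuinely self-contained account this is where essentially all the work lies, carried out through the Casimir operator of $S$ acting on the relevant spaces of cochains --- the same device that underlies the proof of Weyl's theorem. If one prefers to minimise that input, the inductive scheme can be run once more to replace $R$ by an irreducible $S$-submodule before invoking the cohomology vanishing, which then need only be checked with irreducible coefficients; this shortens but does not eliminate the Casimir step. I would write out the two reduction steps in full and then either cite or reprove, via the Casimir element, the required vanishing.
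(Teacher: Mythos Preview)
Your argument is correct and is the standard proof of Levi's theorem: induction on $\dim R$ to reduce to the case of an abelian radical, followed by the cohomological splitting via Whitehead's second lemma $H^{2}(S,R)=0$. There is, however, nothing in the paper to compare it against: the paper does not prove this statement. It is quoted as a classical result from Jacobson's book and used as a preliminary; the citation marker attached to the theorem environment is the paper's entire treatment of it. Your outline is essentially the proof one finds in Jacobson, so in that sense it matches the cited source, but the paper itself supplies no argument of its own here.
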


Further we shall need in the following splitting theorem for
semisimple Lie algebras.

\begin{thm}\cite{Jac}. \label{thm24} An arbitrary finite dimensional semisimple Lie algebra is
decomposed into a direct sum of simple ideals and the decomposition is unique up to permutations of summands.
\end{thm}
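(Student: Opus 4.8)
The plan is to prove existence by induction on $\dim L$ and uniqueness by a short bracket computation, the main tool throughout being the Killing form $\kappa(x,y)=\mathrm{tr}(\mathrm{ad}\,x\cdot\mathrm{ad}\,y)$, which on a semisimple Lie algebra is non-degenerate by Cartan's criterion and satisfies the invariance identity $\kappa([x,y],z)=\kappa(x,[y,z])$.

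The first step is a splitting lemma: if $I$ is any ideal of the semisimple Lie algebra $L$, then $L=I\oplus I^{\perp}$ as a direct sum of ideals, where $I^{\perp}$ is the $\kappa$-orthogonal complement of $I$. Invariance of $\kappa$ shows that $I^{\perp}$ is again an ideal. Set $J=I\cap I^{\perp}$; then $\kappa$ vanishes identically on $J$, and since $J$ is an ideal its own Killing form is the restriction of $\kappa$, so by Cartan's solvability criterion $J$ is a solvable ideal of $L$, hence $J=0$. Non-degeneracy of $\kappa$ then gives $\dim I+\dim I^{\perp}=\dim L$, so the sum is all of $L$. Because $L=I\oplus I^{\perp}$ with $[I,I^{\perp}]\subseteq J=0$, the Killing form of each summand (computed internally) equals the restriction of $\kappa$, so $I$ and $I^{\perp}$ are themselves semisimple.

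For existence, if $L$ is simple there is nothing to prove; otherwise pick a nonzero ideal $I$ of minimal dimension. Since $[I,I^{\perp}]=0$, every ideal of the algebra $I$ is automatically an ideal of $L$, so minimality forces $I$ to have no proper nonzero ideals; as $L$ has no nonzero abelian ideals, $I$ is not abelian, hence simple. Applying the induction hypothesis to the semisimple algebra $I^{\perp}$, which has smaller dimension, we get $I^{\perp}=S_{1}\oplus\cdots\oplus S_{k}$ with each $S_{j}$ a simple ideal of $I^{\perp}$, and the commuting argument again makes each $S_{j}$ an ideal of $L$. Thus $L=I\oplus S_{1}\oplus\cdots\oplus S_{k}$.

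For uniqueness, suppose $L=S_{1}\oplus\cdots\oplus S_{r}=T_{1}\oplus\cdots\oplus T_{s}$ are two decompositions into simple ideals. Fix $i$; since $S_{i}$ is simple, $[S_{i},S_{i}]=S_{i}$, so $S_{i}=[S_{i},L]=\sum_{j}[S_{i},T_{j}]$. Each $[S_{i},T_{j}]$ is an ideal of $L$ contained in $S_{i}$ (one checks $[L,[S_{i},T_{j}]]\subseteq[S_{i},T_{j}]$ using that $S_{i}$ and $T_{j}$ are ideals), hence equals $0$ or $S_{i}$ by simplicity of $S_{i}$; the directness of $\bigoplus_{j}T_{j}$ forces exactly one index $j(i)$ with $[S_{i},T_{j(i)}]=S_{i}$ and all others zero. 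Then $S_{i}\subseteq T_{j(i)}$, and being a nonzero ideal of $L$ inside the simple algebra $T_{j(i)}$, it equals $T_{j(i)}$; so $i\mapsto j(i)$ is the desired bijection. The step most deserving of care is the identification of $J=I\cap I^{\perp}$ as a solvable ideal: this is exactly Cartan's criterion applied to the restriction of $\kappa$, and it should be invoked rather than reproved.
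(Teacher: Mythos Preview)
Your proof is correct and is essentially the standard Killing-form argument one finds in Jacobson's text: non-degeneracy of $\kappa$ on a semisimple algebra, the orthogonal-complement splitting $L=I\oplus I^{\perp}$, induction on dimension for existence, and the bracket computation $[S_i,T_j]\in\{0,S_i\}$ for uniqueness. All steps are sound; in particular your handling of $J=I\cap I^{\perp}$ via Cartan's solvability criterion and the observation that ideals of $I$ are ideals of $L$ (using $[I,I^{\perp}]=0$) are exactly right.

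There is nothing to compare against in the paper itself: Theorem~\ref{thm24} is quoted from \cite{Jac} without proof, as background for the Leibniz-algebra results that follow. Your write-up would serve perfectly well as the omitted reference proof.
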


In \cite{Bar} it is shown that for Leibniz algebra $L$ the ideal $I= id < [x,x] \, |\, x\in L >$ coincides with the space spanned by squares of elements of $L.$ Note that the ideal $I$ is the minimal ideal with respect to the property that the quotient algebra $L/I$ is a Lie algebra.

According to \cite{Jac} a three-dimensional simple
Lie algebra is said to be {\it split} if the algebra contains an element $h$
such that $ad \ h$ has a non-zero characteristic root $\rho$
belonging to the base field. It is well-known that any such
algebra has a basis $\{e,f,h\}$ with the multiplication table
$$ [e,h]=2e, \qquad [f,h]=-2f, \qquad [e,f]=h,$$
$$ [h,e]=-2e, \qquad [h,f]=2f, \qquad [f,e]=-h.$$

This simple $3$-dimensional Lie algebra denoted by $sl_2$ and the
basis $\{e,f,h\}$ is called {\it canonical basis}. Note that any
$3$-dimensional simple Lie algebra is isomorphic to $sl_2.$

Here is the result of \cite{Rakh} which describes simple Leibniz algebras whose quotient Lie algebras are isomorphic to $sl_2.$
\begin{thm}\label{t1}
Let $L$  be a complex finite dimensional simple Leibniz
algebra. Assume that the quotient Lie algebra $L/I$ is isomorphic to the
algebra $sl_2.$ Then there exist a basis $\{e,f,h,x_0,x_1,\dots,
x_m\}$ of $L$ such that non-zero products of basis elements in $L$
are represented as follows:
$$\begin{array}{lll} \, [e,h]=2e, & [h,f]=2f, &[e,f]=h, \\
\, [h,e]=-2e,& [f,h]=-2f, & [f,e]=-h,\\
\, [x_k,h]=(m-2k)x_k, & 0 \leq k \leq m ,&\\
\, [x_k,f]=x_{k+1},  & 0 \leq k \leq m-1, & \\
\, [x_k,e]=-k(m+1-k)x_{k-1}, & 1 \leq k \leq m. &\\
 \end{array}$$
\end{thm}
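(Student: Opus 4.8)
The plan is to realise $L$ as the semidirect product of a copy of $sl_2$ with the ideal $I$, regarded as an $sl_2$-module, to use the simplicity of $L$ to force that module to be irreducible, and then to read off the multiplication table from a suitably normalised weight basis of $I$. First I record the basic structure. Applying the Leibniz identity with a repeated entry, $[x,[y,y]]=[[x,y],y]-[[x,y],y]=0$ for all $x,y\in L$; since by \cite{Bar} the ideal $I$ is the span of the squares $[y,y]$, this gives $[L,I]=0$, and taking $x\in I$ it also gives $[I,I]=0$, so $I$ is an abelian ideal. Hence $I$ is contained in the solvable radical $R$ of $L$; as $L$ is not solvable and $R$ is an ideal, simplicity forces $R=I$ (I assume $I\neq 0$, the alternative being $L\cong sl_2$). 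By the Leibniz analogue of Levi's theorem \cite{Bar} we may write $L=S\,\dot{+}\,I$ for a subalgebra $S$, and $S\cong L/R=L/I\cong sl_2$; being a three-dimensional simple Lie algebra, $S$ has a canonical basis $\{e,f,h\}$, which accounts for the $sl_2$ part of the table since $[u,v]=-[v,u]$ inside the Lie algebra $S$.

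Because $[L,I]=0$, the only nontrivial products involving $I$ are the right multiplications $R_u\colon I\to I$, $R_u(v)=[v,u]$, with $u\in S$; and since $[I,I]=0$ these depend only on the image of $u$ in $L/I$. The Leibniz identity applied to $v\in I$ yields $R_{[u,w]}=-[R_u,R_w]$, so $u\mapsto R_u|_I$ is an anti-homomorphism of $sl_2$; one checks that $E:=R_e|_I$, $F:=-R_f|_I$, $H:=R_h|_I$ satisfy the standard $sl_2$-relations, so $I$ becomes an $sl_2$-module of dimension $\dim I$. The decisive point is that every $sl_2$-submodule $W$ of $I$ is an ideal of $L$: indeed $[L,W]\subseteq[L,I]=0\subseteq W$, while $[W,L]=[W,S]+[W,I]\subseteq W$ since $W$ is invariant under the $R_u$ and $[W,I]\subseteq[I,I]=0$. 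As the only ideals of $L$ are $\{0\}$, $I$ and $L$, and $W\subseteq I\neq L$, we get $W\in\{0,I\}$; that is, $I$ is irreducible. It is not the trivial module, since otherwise all $R_u|_I=0$ and $S$ itself would be a proper nonzero ideal different from $I$. Thus $I$ is irreducible of some dimension $m+1$ with $m\geq 1$, i.e. the irreducible $sl_2$-module of highest weight $m$.

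Now fix a highest weight vector $x_0\in I$ (so $Hx_0=mx_0$ and $Ex_0=0$) and set $x_k:=(-1)^k F^k x_0$ for $0\leq k\leq m$; then $\{x_0,\dots,x_m\}$ is a basis of $I$ and $F^{m+1}x_0=0$. The standard identities $HF^kx_0=(m-2k)F^kx_0$ and $EF^kx_0=k(m+1-k)F^{k-1}x_0$, together with $R_h|_I=H$, $R_f|_I=-F$ and $R_e|_I=E$, give, after a short sign computation,
$$[x_k,h]=(m-2k)x_k,\qquad [x_k,f]=x_{k+1},\qquad [x_k,e]=-k(m+1-k)x_{k-1},$$
with the index ranges $0\le k\le m$, $0\le k\le m-1$ and $1\le k\le m$ respectively. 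Combined with $[x_k,x_l]\in[I,I]=0$, with $[u,x_k]\in[S,I]=0$ for $u\in\{e,f,h\}$, and with the products inside $S$, this exhausts all products of basis elements, completing the proof.

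The Leibniz-identity computations and the $sl_2$-module normalisation are routine. The two steps that genuinely require care are, first, the identification of $sl_2$-submodules of $I$ with ideals of $L$ — this is exactly what lets the hypothesis of simplicity force $I$ to be an irreducible, and non-trivial, module; and second, the sign bookkeeping in passing from the right-multiplication anti-representation to an honest $sl_2$-module, which is what produces the precise coefficients stated, in particular the asymmetry between the $+$ in $[x_k,f]=x_{k+1}$ and the $-$ in $[x_k,e]=-k(m+1-k)x_{k-1}$.
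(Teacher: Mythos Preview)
Your argument is correct. Note, however, that the paper does not itself prove this theorem: it is quoted from \cite{Rakh}. What the paper \emph{does} supply is a separate, cohomological route to the underlying structure result (its Theorem~3.1): using Whitehead's lemma to get $H^2(L/I,I)=0$, then Pirashvili's theorem to deduce $HL^2(L/I,I)=0$, and finally Loday's identification $\mathrm{Ext}(L/I,I)\cong HL^2(L/I,I)$ to conclude that the extension splits, i.e.\ $L\cong L/I\,\dot{+}\,I$. Your proof bypasses all of this by invoking Barnes' Levi theorem directly to obtain the splitting $L=S\,\dot{+}\,I$, and then carries out the explicit $sl_2$-module analysis by hand. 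This is more elementary and self-contained for the present statement; the cohomological argument, by contrast, works uniformly for any simple Leibniz algebra without assuming anything about $L/I$, at the cost of importing heavier machinery. The paper itself remarks (just after its Theorem~2.10) that the Levi route you take also yields the structure result, so your approach is in fact the one the authors flag as the ``obvious'' one before presenting their alternative.

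One small point worth making explicit: your observation that $sl_2$-submodules of $I$ are ideals of $L$, and hence that simplicity forces $I$ to be irreducible, is precisely the step that is left implicit in the paper's discussion; you are right to single it out as the crux.
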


In \cite{Rakh} Leibniz algebras (they are not necessary to be simple) for which  the quotient Lie algebras are isomorphic to $sl_2$ are described.
Let us present a Leibniz algebra $L$ with table of multiplication in a basis $\{e, f, h, x_1^j,\dots,x_{t_j}^j, \, 1\leq j \leq p\}$
which is not simple, but the quotient algebra $L/I$ is a simple
\cite{Rakh}:
$$\begin{array}{lll} \, [e,h]=2e, & [h,f]=2f, &[e,f]=h, \\
\, [h,e]=-2e,& [f,h]=-2f, & [f,e]=-h,\\
\, [x_k^{j},h]=(t_j-2k)x_k^{j}, & 0 \leq k \leq t_j ,&\\
\, [x_k^{j},f]=x_{k+1}^{j},  & 0 \leq k \leq t_j-1, & \\
\, [x_k^{j},e]=-k(t_j+1-k)x_{k-1}^{j}, & 1 \leq k \leq t_j. &\\
 \end{array}$$
where $L=sl_2 + I_1+I_2+ \dots +I_p$ and $I_j=\langle
x_1^j,\dots,x_{t_j}^j \rangle, \, 1\leq j \leq p.$

Note that the above presented Leibniz algebras are examples of non-simple but Lie-simple Leibniz algebras.

Now we define a notion of semisimple Leibniz algebra.
\begin{defn} A Leibniz algebra L is called semisimple if its
maximal solvable ideal is equal to $I$. \end{defn} Since in Lie
algebras case the ideal $I$ is equal to zero, this definition also
agrees with the definition of semisimple Lie algebra.

Obviously, for the sets of $n$-dimensional simple ($SimpL_n$),
Lie-simple ($LieSimpL_n$) and semisimple ($SemiSimpL_n$) Leibniz
algebras the following embeddings  are true:
$$SimpL_n \subseteq LieSimpL_n \subseteq SemiSimpL_n.$$

Although the Levi's theorem is proved for the left Leibniz
algebras \cite{Bar} it also is true for the right Leibniz
algebras (we considering right Leibniz algebras).

\begin{thm}\cite{Bar} (Levi's Theorem).\label{t2}
Let $L$ be a finite dimensional Leibniz algebra over a field of
characteristic zero and $R$ be its solvable radical. Then there
exists a semisimple subalgebra $S$ of $L$, such that
$L=S\dot{+}R.$
\end{thm}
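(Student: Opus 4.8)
The plan is to deduce the right-handed statement from the left-handed version of Levi's theorem established in \cite{Bar}. For a right Leibniz algebra $(L,[\cdot,\cdot])$ let $L^{*}$ be the same vector space equipped with the opposite multiplication $x*y:=[y,x]$, so that $(L^{*})^{*}=L$. A one-line check shows $L^{*}$ is a \emph{left} Leibniz algebra: since $u*v=[v,u]$ we get $x*(y*z)=[[z,y],x]$ and $(x*y)*z+y*(x*z)=[z,[y,x]]+[[z,x],y]$, and these coincide by the right Leibniz identity $[z,[y,x]]=[[z,y],x]-[[z,x],y]$. Moreover every ingredient of the theorem is invariant under $(\cdot)^{*}$: a subspace $W$ satisfies $[W,L]+[L,W]\subseteq W$ (two-sided ideal), or $[W,W]\subseteq W$ (subalgebra), for $L$ exactly when it does for $L^{*}$; the derived subspaces obey $(L^{*})^{[n]}=L^{[n]}$ for all $n$, so $L$ and $L^{*}$ share the same solvable ideals and hence the same solvable radical $R$; the ideal $I=\mathrm{id}\langle[x,x]\mid x\in L\rangle$ is spanned by the squares $x*x=[x,x]$ and is therefore literally the same for $L$ and $L^{*}$; consequently semisimplicity (``the solvable radical equals $I$'') is preserved as well.

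Applying the left-handed Levi theorem of \cite{Bar} to $L^{*}$ produces a semisimple subalgebra $S$ with $L^{*}=S\dot{+}R$. Passing to opposites of this whole configuration and invoking the invariances above, $L=(L^{*})^{*}=S\dot{+}R$ with $R$ the solvable radical of $L$, $S$ a subalgebra, $S\cap R=0$ and $S$ semisimple; this is precisely the assertion of Theorem~\ref{t2}. In this route there is essentially no obstacle beyond the cited left-handed theorem and the (routine) verification that being a subalgebra, a two-sided ideal, the solvable radical, or semisimple, is unaffected by reversing the bracket.

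For a self-contained argument the difficulty is concentrated in a single cohomological point. From the Leibniz identity, $[a,[x,x]]=[[a,x],x]-[[a,x],x]=0$, so $[L,I]=0$ and $I$ is an abelian ideal; in particular $I\subseteq R$ and $L/I$ is a Lie algebra with solvable radical $R/I$. The classical Levi theorem for Lie algebras yields a semisimple subalgebra $\bar S\subseteq L/I$ with $L/I=\bar S\dot{+}(R/I)$, and pulling $\bar S$ back along $\pi\colon L\to L/I$ gives a subalgebra $L'=\pi^{-1}(\bar S)$ whose solvable radical is $I$; a Levi complement of $I$ in $L'$ is automatically a Levi complement of $R$ in $L$, by a dimension count together with $S\cap R\subseteq S\cap I=0$. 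This reduces everything to the case $R=I$. There, $I$ is a finite-dimensional module over the semisimple Lie algebra $L/I$ (the action factoring through $L/I$ because $[I,I]=0$), the extension $0\to I\to L\to L/I\to0$ is classified by a Leibniz $2$-cocycle, and a change of linear section modifies this cocycle by a coboundary; hence the extension splits if and only if $HL^{2}(L/I,I)=0$. Establishing that vanishing is where I expect all of the work: one can split the cocycle into its symmetric and antisymmetric parts, kill the antisymmetric part with Whitehead's second lemma ($H^{2}=0$ for a semisimple Lie algebra with finite-dimensional coefficients) and the symmetric part by an averaging argument against a Casimir element, or else simply quote the known triviality of the second Leibniz cohomology of a semisimple Lie algebra with finite-dimensional coefficients.
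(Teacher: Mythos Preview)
The paper does not actually prove this theorem: it is quoted from \cite{Bar}, and the only comment the authors make is the sentence immediately preceding it, namely that Barnes proved Levi's theorem for \emph{left} Leibniz algebras but ``it also is true for the right Leibniz algebras.'' Your first paragraph spells out precisely this passage from left to right via the opposite bracket $x*y=[y,x]$, and the verifications you give (that $L^{*}$ is left Leibniz, and that subalgebras, ideals, derived series, $I$, solvability, and semisimplicity are unchanged under $(\cdot)^{*}$) are correct. So on the point that the paper itself addresses, your argument coincides with the paper's (implicit) one and simply makes it explicit.

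Your second, self-contained route is not in the paper as a proof of Levi's theorem, but it is exactly the mechanism the authors use for the related Theorem~3.1 (description of simple Leibniz algebras): they reduce to $R=I$, view the extension $0\to I\to L\to L/I\to 0$, invoke Loday--Pirashvili's bijection $\mathrm{Ext}(L/I,I)\cong HL^{2}(L/I,I)$, and kill $HL^{2}$ using Pirashvili's result \cite{pir} (which plays the role of your Whitehead-plus-Casimir step). So your cohomological outline is consistent with, and in fact more general than, what the paper does in Section~3; the paper restricts that argument to the simple case and otherwise relies on \cite{Bar}.
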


From the proof of Theorem \ref{t2} it is not difficult to see that
$S$ is a semisimple Lie algebra. Therefore, we have that a simple
Leibniz algebra is a semidirect sum of simple Lie algebra $S$ and
irreducible right module $I,$ i.e. $L=S\dot{+}I.$ Hence, we get
the description of simple Leibniz algebras in terms of simple Lie
algebras and its ideals $I$. For example see the algebras of
Theorem \ref{t1}.

\begin{defn} A nonzero module $M$ whose only submodules are the
module itself and zero module is called {\it irreducible module}. A
nonzero module $M$ which is a direct sum of irreducible modules is
said to be {\it completely reducible}.
\end{defn}

Further we shall use the following classical result of the theory of Lie algebras.

\begin{thm}\cite{Jac}\label{mod1} Let G be a semisimple Lie algebra over a field of characteristic zero.
Then every finite dimensional module over $G$ is completely
reducible
\end{thm}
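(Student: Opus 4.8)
The plan is to prove this classical statement (Weyl's theorem on complete reducibility) by the Casimir-operator argument, which reduces everything to one computation with a trace form.

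First I would reduce to a convenient form of the statement: it suffices to show that every $G$-submodule $W$ of a finite-dimensional $G$-module $V$ admits a $G$-invariant complement, and, further, it is enough to do this when $W$ has codimension one in $V$. For this last reduction I would use the standard Hom-trick: equip $\mathrm{Hom}_F(V,W)$ with the $G$-module structure $(x\cdot f)(v)=x\cdot f(v)-f(x\cdot v)$, and inside it consider the submodule $\mathcal V=\{f\colon f|_W\in F\cdot\mathrm{id}_W\}$ together with its codimension-one submodule $\mathcal W=\{f\colon f|_W=0\}$. A $G$-invariant line complementary to $\mathcal W$ inside $\mathcal V$ is spanned by a map $f\colon V\to W$ which, after rescaling, is an honest $G$-module homomorphism with $f|_W=\mathrm{id}_W$; then $\ker f$ is the desired complement of $W$ in $V$.

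Second, I would reduce the codimension-one case to the subcase where $W$ is irreducible, by induction on $\dim V$: if $W$ has a nonzero proper submodule $W'$, apply the inductive hypothesis to the codimension-one inclusion $W/W'\subseteq V/W'$, pull the resulting complement back to a submodule $\widetilde W\supseteq W'$ of $V$ in which $W'$ has codimension one, apply the inductive hypothesis again to $W'\subseteq\widetilde W$, and verify that the line so obtained meets $W$ trivially and hence complements $W$ in $V$. Then comes the heart of the matter: the case $0\to W\to V\to F\to 0$ with $W$ irreducible and $G$ acting trivially on $V/W$. Replacing $G$ by its image $\phi(G)\subseteq\mathfrak{gl}(V)$ (still semisimple, being a quotient of $G$ by an ideal, and with the same submodules), I may assume $\phi$ is faithful; if $G=0$ there is nothing to prove, so assume $G\neq 0$. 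If $W$ were the trivial module, then $\phi(G)$ would stabilize the flag $0\subset W\subset V$ acting trivially on both factors, hence be nilpotent, hence (being semisimple) zero, contradicting faithfulness; so $W$ is nontrivial. Now the trace form $\beta(x,y)=\mathrm{tr}(\phi(x)\phi(y))$ is nondegenerate on $G$ (its radical is an ideal, solvable by Cartan's criterion, hence zero). Picking bases $\{x_i\},\{y_i\}$ of $G$ dual with respect to $\beta$, form the Casimir operator $c=\sum_i\phi(x_i)\phi(y_i)\in\mathrm{End}(V)$. It commutes with $\phi(G)$, so is a $G$-module endomorphism of $V$; since $\phi(G)$ kills $V/W$ we get $c(V)\subseteq W$; and by Schur's lemma $c|_W=\lambda\,\mathrm{id}_W$ for some scalar $\lambda$. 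From $\mathrm{tr}_V(c)=\sum_i\beta(x_i,y_i)=\dim G$ and, because $c$ vanishes on $V/W$, also $\mathrm{tr}_V(c)=\lambda\dim W$, I conclude $\lambda=\dim G/\dim W\neq 0$. Hence $\ker c$ is a $G$-submodule with $\ker c\cap W=0$ and $\dim\ker c=\dim V-\dim W=1$, so $V=W\oplus\ker c$.

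The only genuinely non-formal ingredient here is the construction of the Casimir operator, i.e. the nondegeneracy of the trace form of a faithful representation of a semisimple Lie algebra, which itself rests on Cartan's solvability criterion; everything else is bookkeeping with the two reductions and one trace computation. This self-contained route parallels the treatment in \cite{Jac}, which may alternatively be cited directly.
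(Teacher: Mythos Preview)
The paper does not prove this theorem at all; it is simply quoted from Jacobson \cite{Jac} as a classical result (Weyl's theorem on complete reducibility) and used as a black box in the sequel. Your proposal, by contrast, supplies a complete and correct proof via the Casimir operator, which is essentially the argument given in the reference the paper cites. So you have filled in what the paper deliberately leaves to the literature, and your route coincides with Jacobson's.

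One small remark on your write-up: when you invoke Schur's lemma to conclude $c|_W=\lambda\,\mathrm{id}_W$ for a scalar $\lambda$, this uses algebraic closedness, whereas the statement is over an arbitrary field of characteristic zero. The fix is painless: Schur over any field still gives that the $G$-endomorphism $c|_W$ is either zero or an isomorphism, and since $c$ is block upper triangular with zero block on $V/W$ one has $\mathrm{tr}_V(c)=\mathrm{tr}_W(c|_W)=\dim G\neq 0$, ruling out the first alternative. Invertibility of $c|_W$ is all you actually need to conclude $\ker c\cap W=0$ and $\dim\ker c=\dim V-\dim W=1$.
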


Here is an example of simple Leibniz algebras constructed in \cite{Dzum}.

\textbf{Example 1.} Let $G$ be a simple Lie algebra
and $M$ be an irreducible skew-symmetric $G-$module (i.e.
$[x,m]=0$ for all $x \in G, m \in M$). Then the vector space
$Q=G+M$ equipped with the multiplication $[x+m,y+n]=[x,y]+[m,y],$
where $m, n \in M, x, y \in G$ is a simple Leibniz algebra.

\section{The main results}

As it was mentioned above from Theorem \ref{t2}\ it follows that
any simple Leibniz algebra is presented as a semidirect sum of a
simple Lie algebra and the ideal $I.$

Below we give another proof of the description of simple Leibniz algebras without using Levi's theorem.

\begin{thm} Let L be a finite dimensional simple Leibniz algebra. Then it has the construction
of the Example 1 for $G \cong L/I$ and $M=I.$
\end{thm}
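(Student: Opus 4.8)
The plan is to establish the claim directly from the definition of simplicity, bypassing Levi's theorem entirely. Let $L$ be a finite-dimensional simple Leibniz algebra and set $G = L/I$. First I would record that $G$ is a simple Lie algebra: the quotient $L/I$ is always a Lie algebra (since $I$ is the minimal ideal with this property, as noted after Theorem~\ref{thm24}), and its only ideals correspond to ideals of $L$ containing $I$; simplicity of $L$ forces these to be $I$ and $L$, so $G$ is simple as a Lie algebra and in particular $G \neq 0$. Next, view $I$ as a right $L$-module; since $[I,I] = 0$ follows from $I \subseteq \ker$ of the left multiplications in a way that makes $I$ abelian (more precisely, $I$ is spanned by squares and for Leibniz algebras $[[x,x],y]$-type relations force $[I,I]=0$), the $L$-action on $I$ factors through $G = L/I$, so $I$ is a finite-dimensional $G$-module.

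The second step is to show $I$ is an irreducible $G$-module. Any $G$-submodule $J \subseteq I$ is an ideal of $L$ (it is stable under right multiplication by $L$, and left multiplication by anything lands in $I$, hence we must check left-stability — here I would use that left multiplication by elements of $I$ is zero on the whole algebra, which is a standard consequence of the Leibniz identity, and left multiplication by $x \in L$ maps $J$ into... this needs the skew-symmetry of the action, handled below). Granting that submodules of $I$ are ideals of $L$, simplicity gives $J \in \{0, I, L\}$, and since $J \subseteq I \subsetneq L$ we get $J \in \{0, I\}$, so $I$ is irreducible.

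The third step, and the one I expect to be the main obstacle, is to show the $G$-module $I$ is \emph{skew-symmetric} in the sense of Example~1, i.e. $[x, m] = 0$ for all $x \in L$, $m \in I$ — equivalently, left multiplication by any element kills $I$. Consider the subspace $I_{\mathrm{ann}} = \{ m \in I : [x,m] = 0 \text{ for all } x \in L\}$. I would argue this is a $G$-submodule of $I$: if $m \in I_{\mathrm{ann}}$ then for $y \in L$, $[x, [m,y]] = [[x,m],y] - [[x,y],m] = 0 - 0 = 0$ using the Leibniz identity and the fact that $[[x,y],m] \in [L, I_{\mathrm{ann}}]$... which requires knowing $[[x,y],m]=0$, i.e. that $I_{\mathrm{ann}}$ is stable, so this is slightly circular and needs care; the cleaner route is to observe that $I \cap \{m : [L,m]=0\}$ is automatically a submodule because the left-annihilator of $L$ is always a two-sided ideal in a Leibniz algebra and its intersection with $I$ is $L$-stable. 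By irreducibility from Step~2, $I_{\mathrm{ann}}$ is either $0$ or all of $I$. To exclude $I_{\mathrm{ann}} = 0$: if the left action of $L$ on $I$ were nontrivial in this strong sense, one shows the induced bracket would make $I$ contribute to a Lie-algebra structure, contradicting the minimality of $I$ as the kernel making $L/I$ Lie — concretely, $\mathrm{id}\langle [x,x] : x \in L \rangle = I$ combined with a nonzero skew part forces $L^{[2]} \cap I$ to be strictly smaller, contradicting $I \subseteq L^{[2]}$ or contradicting that $I$ is spanned by squares. Once $I_{\mathrm{ann}} = I$ is established, the multiplication on $L$ is exactly $[x+m, y+n] = [x,y] + [m,y]$ with $[x,y]$ computed in $G$ and $[m,y]$ the module action, which is precisely the construction of Example~1 with $G \cong L/I$ and $M = I$; it remains only to check that $M = I$ is irreducible (done in Step~2) and nonzero (true since $L$ is non-Lie, as a simple Leibniz algebra has $L^{[2]} \neq I$ by definition, forcing $I \neq 0$).
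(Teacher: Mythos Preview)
Your Step~3 is aimed at the wrong target, and the actual content of the theorem is missing.

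First, the ``skew-symmetry'' $[x,m]=0$ for $m\in I$ is automatic and costs one line: $I$ is spanned by squares $[a,a]$, and the Leibniz identity gives $[x,[a,a]]=[[x,a],a]-[[x,a],a]=0$ for every $x$. So $[L,I]=0$ always holds (the paper simply quotes this as ``$I$ is contained in the right annihilator''). Your elaborate $I_{\mathrm{ann}}$ argument is unnecessary, and the circularity you flagged is a symptom of attacking a non-issue.

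Second, and this is the genuine gap: from $[L,I]=0$, $[I,I]=0$, and irreducibility of $I$ you cannot conclude that ``the multiplication on $L$ is exactly $[x+m,y+n]=[x,y]+[m,y]$ with $[x,y]$ computed in $G$''. Pick any vector-space complement $V$ to $I$ in $L$; for $x,y\in V$ the product $[x,y]$ decomposes as $\pi(x,y)+\omega(x,y)$ with $\pi(x,y)\in V$ and $\omega(x,y)\in I$, and $\omega$ is a $2$-cocycle measuring the obstruction to $V$ being a subalgebra. Nothing you have written forces $\omega=0$ (or cohomologous to $0$). Equivalently, you have only identified $L$ as an abelian extension $0\to I\to L\to L/I\to 0$, not as the split extension $L/I\dot{+}I$ that Example~1 describes. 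The heart of the theorem is precisely this splitting.

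The paper supplies exactly this missing step, via cohomology rather than Levi's theorem: Whitehead's lemma gives $H^2(L/I,I)=0$, Pirashvili's result upgrades this to $HL^2(L/I,I)=0$, and Loday's identification $\mathrm{Ext}(L/I,I)\cong HL^2(L/I,I)$ then forces the extension to split. Your Steps~1 and~2 (simplicity of $L/I$, irreducibility of $I$) are fine and match the paper's setup, but you need some substitute for the cohomological vanishing to finish.
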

\begin{proof}
Let $L$ be an algebra satisfying the conditions of theorem. It should be noted, that the ideal $I$ may be considered as a right $L/I$-module by the action: $$m*(a+I)=[m,a],$$ where $m \in
I, a+I\in L/I.$ Since $L/I$ is a simple Lie algebra then by Whytehead's Lemma \cite{Jac} we have $H^2(L/I, I)=0.$

J.-L. Loday \cite{lodpir} established that there exist the
following natural bijection: $$Ext(L,M) \cong HL^2(L,M).$$

Recall a result of T. Pirashvili \cite{pir}, which says the
following:

Let $g$ is semisimple Lie algebra and $M$ is right irreducible
module over $g$ such that $H^2(g,M)=0.$ Then $HL^2(g, M)=0.$

Let now $L=L' \oplus I$ is a direct sum vector spaces, where $L' \cong L/I$ is a simple Lie
algebra. Since ideal $I$ is contained in right annihilator of the algebra $L$ then we have $[L', I]=0$ and $[I , I]=0.$ Due to simplicity of Leibniz algebra $L$ we derive that the ideal $I$ is irreducible $L/I$-module. Using Pirashvili's result we have $HL^2(L/I,I)=0,$ but the condition
$0=HL^2(L/I,I) \cong Ext(L/ I,I)$ is equivalent to $L \cong L/I
\dot{+} I.$ \end{proof}

Let us investigate the case of semisimple Leibniz algebras. Let
$L$ be a semisimple Leibniz algebra. Similarly to the case of
simple Leibniz algebras we can establish that $L \cong L/I + I.$
It is known the result on decomposition of semisimple Lie algebra
$L/I$ into a direct sum of simple Lie ideals. Moreover, we have
that $L/I$-module $I$ is completely reducible and hance, ideal $I$
is decomposed into a direct sum of irreducible submodules over the
Lie algebra $L/I.$

Taking into account these results for semisimple Leibniz algebras
it seems that the following conclusion is true for Leibniz
algebras case:

\textbf{Conclusion.} An arbitrary finite dimensional semisimple
Leibniz algebra is decomposed into direct sum of simple ones.

Let $L$ be a finite dimensional semisimple Leibniz algebra. Then
according to Theorem \ref{t2} we have $L = S \dot{+} I,$ where $S$
is a semisimple Lie algebra and $[I, S] = I.$ From Theorem
\ref{thm24} we get $S = S_1 \oplus S_2 \oplus \dots \oplus S_k,$
where $S_i$, ($\ 1 \leq i \leq k$) is a simple Lie algebra. Thus,
we have
$$L = (S_1 \oplus S_2 \oplus \dots \oplus S_k) \dot{+} I.$$

Let us introduce the denotation $I_j = [I, S_j]$ for $1 \leq j \leq k.$

\begin{lem}\label{l1} The following are true:

a) $I = I_1+ I_2+ \dots + I_k;$

b) $I_j$ is an ideal of $L$ for all $ j \ (1 \leq j \leq k);$

c) $I_j = [I_j, S_j]$ for all $ j \ (1 \leq j \leq k);$

d) $S_j + I_j$ is an ideal of $L$ for all $ j \ (1 \leq j \leq k)þ$

\end{lem}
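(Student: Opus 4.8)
The plan is to work entirely within the completely reducible $L/I$-module structure on $I$. Write $S=S_1\oplus\dots\oplus S_k$ with each $S_j$ simple, and recall that $[I,S]=I$ and $I$ is a completely reducible $S$-module, so $I$ decomposes as a direct sum of irreducible $S$-submodules $I=M_1\oplus\dots\oplus M_r$. The crucial observation (standard for semisimple Lie module theory) is that on each irreducible $M_\ell$ at most one factor $S_j$ acts nontrivially, because the subalgebras $S_j$ pairwise commute and the action of a semisimple Lie algebra on an irreducible module is faithful only through the factors that act nontrivially; more precisely, $\sum_{j}[M_\ell,S_j]$ is a submodule of $M_\ell$ equal to $M_\ell$ by the representation being nontrivial, and the $[M_\ell, S_j]$ for distinct $j$ either coincide with $M_\ell$ or are zero — and two distinct factors cannot both act nontrivially, else $M_\ell$ would not be irreducible over the one acting but rather split under the other's commuting action. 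Granting this, each $M_\ell$ is a module over a single $S_{j(\ell)}$ and annihilated by all the others.

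With that structural fact in hand, part a) follows immediately: $I=\sum_\ell M_\ell$ and each $M_\ell=[M_\ell,S_{j(\ell)}]\subseteq [I,S_{j(\ell)}]=I_{j(\ell)}$, so $I=\sum_j I_j$; conversely each $I_j\subseteq I$, giving $I=I_1+\dots+I_k$. For part c): $I_j=[I,S_j]=\sum_\ell [M_\ell,S_j]$, and $[M_\ell,S_j]$ is nonzero only when $j(\ell)=j$, in which case $[M_\ell,S_j]=M_\ell$; hence $I_j=\bigoplus_{\ell:\,j(\ell)=j}M_\ell$, and applying $[-,S_j]$ again returns the same sum, so $[I_j,S_j]=I_j$.

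For part b), I would check $I_j$ is an ideal directly. It is a subspace of $I$, hence $[L,I_j]\subseteq[L,I]\subseteq I$ and in fact $[I,L]=0$ because $I$ lies in the right annihilator of $L$ (as in the simple case); so only $[I_j,L]$ needs care. Write $L=S\dot{+}I$; then $[I_j,I]=0$ since $I$ is in the right annihilator, and $[I_j,S]=\sum_m[I_j,S_m]$. For $m=j$ this is $[I_j,S_j]=I_j$ by c); for $m\neq j$, using $I_j=\bigoplus_{\ell:\,j(\ell)=j}M_\ell$ and that $S_m$ annihilates each such $M_\ell$, we get $[I_j,S_m]=0$. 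Hence $[I_j,S]=I_j$ and $I_j$ is a two-sided ideal. Part d) then follows: $S_j+I_j$ is a subspace; $[S_j+I_j,\,S_j+I_j]\subseteq [S_j,S_j]+[I_j,S_j]\subseteq S_j+I_j$, so it is a subalgebra; and for the ideal property, $[L,S_j+I_j]\subseteq [L,S_j]+[L,I_j]$ — the term $[L,I_j]\subseteq I_j$ by b), while $[L,S_j]=[S,S_j]+[I,S_j]\subseteq S_j+I_j$ using that $S_m$ commutes with $S_j$ for $m\neq j$ and that $[I,S_j]=I_j$; symmetrically $[S_j+I_j,L]\subseteq S_j+I_j$ using b) and $[S_j,S]\subseteq S_j$, $[S_j,I]=0$.

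The main obstacle is the structural lemma underlying everything: that each irreducible summand $M_\ell$ of $I$ is acted on nontrivially by exactly one simple factor $S_j$. Everything else is bookkeeping with the decomposition $I=\bigoplus_j I_j$ and the pairwise commutativity of the $S_j$, but this "one factor acts" fact is what makes the $I_j$ genuinely complementary ideals rather than overlapping subspaces. I expect to establish it by noting that $\mathrm{Ann}_S(M_\ell)$ is an ideal of $S$, hence a sum of some of the $S_m$'s, and that $S/\mathrm{Ann}_S(M_\ell)$ acts faithfully and irreducibly — then a classical fact (a faithful irreducible module over a direct sum of simple Lie algebras cannot have the module be irreducible unless the sum is a single simple factor, because a tensor-product decomposition would otherwise force reducibility over each factor) pins down that exactly one $S_j$ survives, or alternatively one argues that if $S_{m_1}, S_{m_2}$ both acted nontrivially then $[M_\ell, S_{m_1}]$ would be a proper nonzero $S$-submodule. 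I would phrase whichever of these is cleanest given the conventions already fixed in the paper.
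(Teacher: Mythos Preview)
Your ``structural lemma'' --- that each irreducible $S$-summand $M_\ell$ of $I$ is acted on nontrivially by exactly one simple factor $S_j$ --- is false, and it is the load-bearing step in your argument. Over a semisimple Lie algebra $S=S_1\oplus\cdots\oplus S_k$ the finite-dimensional irreducibles are precisely the tensor products $V_1\otimes\cdots\otimes V_k$ with each $V_j$ irreducible over $S_j$; whenever two or more of the $V_j$ are nontrivial, several simple factors act nontrivially on the same irreducible. The paper's own Example~3 is exactly this situation: there $I$ is a $4$-dimensional irreducible $(sl_2^1\oplus sl_2^2)$-module (the tensor product of two $2$-dimensional irreducibles), and one has $[I,sl_2^1]=[I,sl_2^2]=I$, so $I_1=I_2=I$. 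Both of your proposed justifications collapse here: $[M_\ell,S_{m_1}]$ equals all of $M_\ell$, not a proper submodule, and a faithful irreducible module over a direct sum of simples need not factor through a single summand. Consequently your proof of (b) breaks at the claim ``$[I_j,S_m]=0$ for $m\neq j$'', which is simply untrue in Example~3, and the assignment $\ell\mapsto j(\ell)$ underlying your description $I_j=\bigoplus_{j(\ell)=j}M_\ell$ is ill-defined.

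The paper avoids any module decomposition and works directly with the Leibniz identity. For (a) one just uses $I=[I,S]\subseteq\sum_j[I,S_j]$. For (b) one writes $[I_j,L]=[[I,S_j],L]\subseteq[I,[S_j,L]]+[[I,L],S_j]$ and observes $[S_j,L]\subseteq S_j$ and $[I,L]\subseteq I$, landing back in $[I,S_j]=I_j$. For (c), $I_j=[I,[S_j,S_j]]\subseteq[[I,S_j],S_j]+[[I,S_j],S_j]=[I_j,S_j]$. None of this requires knowing how $I$ splits into irreducibles, and in particular it never asserts that the $I_j$ meet only in zero --- which, as Example~3 and the paper's closing remark make explicit, they need not.
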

\begin{proof} Since $I$ is an ideal of $L$ then $I_j = [I, S_j] \subseteq I$ for all $j.$ Hence, $I_1 + I_2+ \dots
+ I_k \subseteq I.$

From $$ I = [I, S] = [I, S_1 \oplus S_2 \oplus
\dots \oplus S_k] \subseteq [I, S_1] + [I, S_2]+ \dots +[I, S_k] =
I_1+I_2+ \dots +I_k,$$ we have the correctness of the statement a).

The proof of the statement b) follows from property $[S_j, S_j]
=S_j$ and we have
$$[I_j, L] = [[I, S_j], L] \subseteq [I, [ S_j, L]] + [[I, L],
S_j] = $$$$=[I, [ S_j, (S_1 \oplus S_2\oplus \dots \oplus S_k)
\dot{+}I]] + [[I, (S_1 \oplus S_2 \oplus \dots \oplus S_k) \dot{+}
I], S_j]\subseteq$$
$$\subseteq [I, [ S_j, (S_1 \oplus S_2\oplus \dots \oplus S_k)]] + [[I, (S_1 \oplus S_2 \oplus \dots \oplus S_k)],
S_j]\subseteq$$ $$ \subseteq [I, [ S_j, S_j]] + [I, S_j] \subseteq
[I, S_j] = I_j.$$

Since from b) we have that $I_j$ is an ideal of $L,$ we obtain $[I_j,
S_j] \subseteq I_j$ and from $$I_j = [I, S_j] = [I, [S_j, S_j]]
\subseteq [[I, S_j], S_j] + [[I, S_j], S_j] = [I_j, S_j],$$ we get
$[I_j, S_j] = I_j.$ So, the statement c) is also proved.

From the following equalities:  $$[S_j+I_j, L] = [S_j+I_j,
(S_1\oplus  S_2 \oplus \dots \oplus S_k) \dot{+} I] =$$ $$= [S_j,
S_1\oplus  S_2 \oplus \dots \oplus S_k] + [I_j, S_1\oplus  S_2
\oplus \dots \oplus S_k]= S_j+I_j,$$
$$[L, S_j+I_j] = [(S_1\oplus  S_2 \oplus \dots \oplus S_k) \dot{+} I, S_j] = [S_1\oplus  S_2 \oplus \dots \oplus S_k, S_j] + [I,
S_j] = S_j+I_j,$$

we get that $S_j+I_j$ is an ideal of $L.$ Thus, the part d) is also proved.
\end{proof}

The following example shows that the Conclusion is not true in
general.

\textbf{Example 2.} Let $L$ be a semisimple Leibniz algebra such
that $L=(sl_2^1 \oplus sl_2^2) \dot{+} I,$ where $I= I_1 \oplus
I_2$ and $[I_1, sl_2^2] = [I_2, sl_2^1] =0.$ Moreover, $I_1 =
I_{1,1}\oplus I_{1,2}, I_2 = I_{2,1}\oplus I_{2,2},$ where
$I_{1,1}$ and $I_{1,2}$ are irreducible $sl_2^1$-modules.
Respectively, $I_{2,1}$ and $I_{2,2}$ are irreducible
$sl_2^2$-modules. Then, using the result \cite{Rakh} we conclude
that, there exists a basis $\{e_1, h_1, f_1, e_2, h_2, f_2,
x_0^{j}, x_1^{j}, \dots ,x_{t_j}^{j}\},$ ($1\leq j \leq 4$) such
that multiplication table of $L$ in this basis has the following
form:

$$[sl_2^i, sl_2^i]: \quad \begin{array}{lll}
\ [e_i,h_i]=2e_i, & [f_i,h_i]=-2f_i, & [e_i,f_i]=h_i, \\
\ [h_i,e_i]=-2e_i & [h_i,f_i]=2f_i,  & [f_i,e_i]=-h_i, \ i=1,2. \\
 \end{array}$$

$$[I_1,sl_2^1]: \quad \begin{array}{ll}
\, [x_k^j,h_1]=(t_j-2k)x_k^j, & 0 \leq k \leq t_j, \\
\, [x_k^j,f_1]=x_{k+1}^j,  & 0 \leq k \leq t_j-1, \\
\, [x_k^j,e_1]=-k(t_j+1-k)x_{k-1}^j, & 1 \leq k \leq t_j, \ j=1,2.\\
 \end{array}$$

$$[I_2,sl_2^2]: \quad \begin{array}{ll}
\, [x_k^j,h_2]=(t_j-2k)x_k^j, & 0 \leq k \leq t_j, \\
\, [x_k^j,f_2]=x_{k+1}^j,  & 0 \leq k \leq t_j-1, \\
\, [x_k^j,e_2]=-k(t_j+1-k)x_{k-1}^j, & 1 \leq k \leq t_j, \ j=3,4.\\
 \end{array}$$
where $I_{1,1}=\{ x_0^1,\dots,x_{t_1}^1\},$ $I_{1,2}=\{
x_0^2,\dots,x_{t_2}^2\},$ $I_{2,1}=\{ x_0^3,\dots,x_{t_3}^3\}$ and
$I_{2,2}=\{x_0^4,\dots,x_{t_4}^4\}.$

Evidently, the algebra $L$ is semisimple and it is decomposed into
the direct sum of two ideals $sl_2^1\dot{+}(I_{1,1}\oplus
I_{1,2})$ and $sl_2^2\dot{+}(I_{2,1}\oplus I_{2,2}),$ which are
not simple Leibniz algebras, but there are Lie-simple Leibniz
algebras.

Example 2 shows that in case of $I_j$ is reducible module over a simple Lie algebra $S_j,$ then
the Conclusion is not true. Now we consider the case of $I_j$ is
an irreducible module. First we prove the following lemma.

\begin{lem}\label{l2} Let $L$ be a semisimple Leibniz algebra such that $L=(sl_2\oplus S) \dot{+}I,$ where $S$ is an arbitrary simple Lie algebra. Let $I$ is irreducible over $sl_2,$ then $[I,S] = 0.$
\end{lem}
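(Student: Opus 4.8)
The plan is to turn $I$ into a module over the semisimple Lie algebra $L/I\cong sl_2\oplus S$ and then exploit that the actions of the two summands commute. Since $L$ is semisimple, $I$ is its solvable radical and, as noted above, $I$ lies in the right annihilator of $L$, so $[L,I]=0$ and $[I,I]=0$; hence $m\cdot a:=[m,a]$ makes $I$ a right module over $L/I\cong sl_2\oplus S$, which is irreducible over $sl_2$ by hypothesis. Moreover $sl_2$ and $S$ are ideals of the Levi subalgebra $sl_2\oplus S$, which is a direct sum of Lie algebras, so $[sl_2,S]=[S,sl_2]=0$ in $L$.

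Next I would use the Leibniz identity to see that the action of $S$ commutes with that of $sl_2$ on $I$. For $m\in I$, $s\in S$ and $a\in sl_2$, the identity gives $[m,[s,a]]=[[m,s],a]-[[m,a],s]$, and since $[s,a]=0$ we obtain $[[m,s],a]=[[m,a],s]$. In other words the linear operator $R_s\colon I\to I$, $R_s(m)=[m,s]$, commutes with $R_a$ for every $a\in sl_2$, i.e. $R_s$ is an endomorphism of the irreducible $sl_2$-module $I$. By Schur's Lemma there is a scalar $\lambda(s)$ with $R_s=\lambda(s)\,\mathrm{id}_I$, and $s\mapsto\lambda(s)$ is a linear map $S\to F$.

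It then remains to show $\lambda\equiv0$. Applying the Leibniz identity to $m\in I$ and $s_1,s_2\in S$ yields $[m,[s_1,s_2]]=[[m,s_1],s_2]-[[m,s_2],s_1]$, that is $R_{[s_1,s_2]}=R_{s_2}R_{s_1}-R_{s_1}R_{s_2}=\bigl(\lambda(s_1)\lambda(s_2)-\lambda(s_2)\lambda(s_1)\bigr)\mathrm{id}_I=0$. Hence $\lambda$ vanishes on $[S,S]$, and since $S$ is simple $[S,S]=S$, so $\lambda=0$; therefore $R_s=0$ for all $s\in S$, which is precisely $[I,S]=0$. The only point requiring care is the appeal to Schur's Lemma, which needs $I$ to be absolutely irreducible over $sl_2$; this is automatic because the finite-dimensional irreducible modules of the split algebra $sl_2$ are the symmetric powers of the natural module and, over a field of characteristic zero, have endomorphism algebra equal to the ground field. (Alternatively, the identity $[[m,s],a]=[[m,a],s]$ shows directly that $[I,S]$ is an $sl_2$-submodule of $I$, hence equals $0$ or $I$, and the computation of $\lambda$ above then rules out the possibility $[I,S]=I$.)
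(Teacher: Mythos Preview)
Your argument is correct and follows the same logical skeleton as the paper's proof: show that each $R_s$ commutes with the $sl_2$-action on $I$, deduce that $R_s$ is a scalar, and then use $[S,S]=S$ to force all these scalars to vanish. The only difference is presentational: the paper carries this out in explicit coordinates, working with the standard basis $\{x_0,\dots,x_m\}$ of the irreducible $sl_2$-module and computing $[x_i,y_j]$ directly from the Leibniz identity, in effect verifying the Schur-Lemma step by hand rather than invoking it abstractly as you do.
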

\begin{proof}
Let $dimI=m+1,$ then similarly as in the proof of Theorem \ref{t1} we have the existence of basis $\{e, f, h, x_0, x_1, \dots , x_m\}$ of $sl_2+I$
such that table of multiplication has the following form:
$$\begin{array}{ll}
\, [x_i,h]=(m-2i)x_i, & 0 \leq i \leq m, \\
\, [x_i,f]=x_{i+1},  & 0 \leq i \leq m-1, \\
\, [x_i,e]=-i(m+1-i)x_{k-1}, & 1 \leq i \leq m. \\
 \end{array}$$

Let $\{y_1, y_2, \dots, y_n\}$ is a basis of algebra $S.$ We set
$$[x_0, y_j] = \sum\limits_{k=0}^m \alpha_{k,j}x_k, \quad 1 \leq j \leq n.$$

Consider the Leibniz identity $$[[x_0, y_j], f] = [x_0, [y_j, f]]
+ [[x_0, f], y_j] = [x_1, y_j].$$

On the other hand $$[[x_0, y_j], f] = [\sum\limits_{k=0}^m
\alpha_{k,j}x_k, f] = \sum\limits_{k=0}^{m-1}
\alpha_{k,j}x_{k+1}.$$

Hence, we get
$$[x_1, y_j] = \sum\limits_{k=0}^{m-1} \alpha_{k,j}x_{k+1} , \quad 1 \leq j \leq n.$$

From the equalities $[[x_i, y_j], f] = [x_i, [y_j, f]] + [[x_i,
f],y_j],$ we obtain $$[x_i, y_j] = \sum\limits_{k=0}^{m-i}
\alpha_{k,j}x_{k+i} , \quad 0 \leq i \leq m, \ 1 \leq j \leq n.$$

Consider the Leibniz identity $$[[x_0, y_j], e] = [x_0, [y_j, e]]
+ [[x_0, e], y_j] = 0.$$

On the other hand, $$[[x_0, y_j], e] = [\sum\limits_{k=0}^m
\alpha_{k,j}x_k, e] = \sum\limits_{k=1}^{m}
k(-m-1+k)\alpha_{k,j}x_{k-1}.$$ We obtain $\alpha_{i,j}
=0,$ for $1 \leq i \leq m, \ 1 \leq j \leq n$ and we can assume
that $$[x_i, y_j] = \alpha_jx_i, \quad 0 \leq i \leq m, \ 1 \leq j
\leq n.$$

Using the Leibniz identity, we have
$$[x_i,[y_j, y_k]]= [[x_i,y_j], y_k] - [[x_i,y_k], y_j] =
[\alpha_jx_i, y_k] - [\alpha_kx_i, y_j] = \alpha_k\alpha_kx_i -
\alpha_k\alpha_kx_i =0.$$

Taking into account the property $[S,S] =S$ and
arbitrariness of elements $\{x_i, y_j, y_k\}$ we get $[I,[S,S]] =
[I,S] = 0.$
\end{proof}

In the following Theorem we show the trueness of the Conclusion under some conditions.

\begin{thm}
Let $L$ be a semisimple Leibniz algebras such that
$L=(sl_2^1\oplus sl_2^2 \oplus \dots \oplus sl_2^{k-1} \oplus S_k)
\dot{+}I.$ Let $I_j$ is irreducible module over $sl_2^j$ for
$j=\overline{1, k-1}$ and $I_k$ is irreducible over $S_k.$ Then
$L$ is decomposed into direct sum of simple Leibniz algebras,
namely,
$$L=(sl_2^1\dot{+}I_1)\oplus (sl_2^2\dot{+}I_2)\oplus \dots \oplus (sl_2^{k-1}\dot{+}I_{k-1})\oplus (S_k \dot{+}I_k).$$
\end{thm}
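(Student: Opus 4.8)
The plan is to reduce the general statement to repeated application of Lemma \ref{l2} and Lemma \ref{l1}. First I would invoke Lemma \ref{l1} to split the ideal $I$ as $I = I_1 + I_2 + \dots + I_k$ where $I_j = [I,S_j]$ (here $S_j = sl_2^j$ for $j \leq k-1$ and $S_k$ is the remaining simple factor), and to record that each $I_j$ is an ideal of $L$, that $I_j = [I_j, S_j]$, and that each $S_j \dot{+} I_j$ is an ideal of $L$. The task then divides into two parts: showing the sum $I = I_1 \oplus \dots \oplus I_k$ is direct, and showing that the ideals $S_j \dot{+} I_j$ multiply trivially against each other, so that $L = \bigoplus_j (S_j \dot{+} I_j)$ is a direct sum of ideals.

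Next I would establish the cross-vanishing $[I_j, S_i] = 0$ for $i \neq j$. This is exactly where Lemma \ref{l2} enters. Fix $i \neq j$. Restrict attention to the subalgebra generated by $S_i$, $S_j$ and the relevant part of $I$: since $I_j$ is irreducible over $S_j$ (this is the irreducibility hypothesis, using $sl_2^j$ when $j \leq k-1$ and $S_k$ when $j = k$), and $S_i$ is a simple Lie algebra, Lemma \ref{l2} applied to the configuration $(S_j \oplus S_i) \dot{+} I_j$ gives $[I_j, S_i] = 0$. One should be slightly careful that Lemma \ref{l2} is stated with the distinguished factor being $sl_2$; when $j \leq k-1$ this is literally the case, and when $j = k$ the roles are symmetric enough — actually for $j=k$ we use that $I_k$ is irreducible over $S_k$ and each $S_i$ ($i \leq k-1$) is $sl_2$, so the lemma applies with $sl_2 = S_i$ playing the role of the ``arbitrary simple Lie algebra'' $S$ and $S_k$ playing the role of $sl_2$ — provided Lemma \ref{l2}'s proof only used that the first factor carries an irreducible module, which it does. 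So in all cases $[I_j, S_i] = 0$ for $i \neq j$.

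From $[I_j, S_i] = 0$ for $i \neq j$ together with $I_j = [I_j, S_j] = [I, S_j]$ and $[S_i, S_j] = 0$ for $i \neq j$ (distinct simple ideals of the semisimple Lie algebra $S$ commute), I would deduce all the needed orthogonality: $[I_i, I_j] = 0$ because $I \subseteq \mathrm{Ann}_r(L)$ kills everything on the right and $I_i = [I, S_i]$ expands via Leibniz into brackets of the form $[I, [S_i, \text{stuff}]]$ which land in $I_i$ and pair trivially with $S_j$; more directly, $[S_i + I_i, S_j + I_j] \subseteq [S_i, S_j] + [I_i, S_j] + [S_i, I_j] + [I_i, I_j]$, and $[S_i, S_j] = 0$, $[I_i, S_j] = 0$, $[S_i, I_j] = 0$ (right annihilator), $[I_i, I_j] = 0$ (right annihilator), so the two ideals $S_i \dot{+} I_i$ and $S_j \dot{+} I_j$ annihilate each other. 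For directness of the sum $I = I_1 + \dots + I_k$: if $v \in I_j \cap \sum_{i \neq j} I_i$, then $v$ is killed by $S_i$ for $i \neq j$ and, being in $I_j$, we can write $v = [w, s]$ with $s \in S_j$; irreducibility of $I_j$ over $S_j$ forces the $S_j$-submodule generated by $v$ to be all of $I_j$ or $0$, and a short argument (comparing how $S_i$, $i\neq j$, acts) shows the intersection is zero — alternatively argue that $\sum_{i\neq j} I_i$ is an $S_j$-submodule on which the action factors appropriately, giving a direct-sum decomposition of the completely reducible $S$-module $I$ into its isotypic/factor pieces.

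The main obstacle I anticipate is precisely this last point — verifying that the decomposition $I = \sum I_j$ is direct, i.e. $I_j \cap \sum_{i\neq j} I_i = 0$. The cleanest route is to observe that $I$ is a completely reducible module over the semisimple Lie algebra $S = S_1 \oplus \dots \oplus S_k$, so it decomposes into irreducible $S$-submodules; on each irreducible $S$-summand $N$, all but one of the factors $S_i$ must act trivially (an irreducible module over a direct sum of simple Lie algebras is an outer tensor product of irreducibles, but under our hypotheses each $I_j$ restricted to $S_j$ is already irreducible, forcing the others to act as zero), and then $N \subseteq I_j$ for exactly the one index $j$ with $S_j$ acting nontrivially — with the convention that any summand on which all $S_i$ act trivially lies in the right annihilator and, since such a summand would be a Lie ideal contradicting semisimplicity of $L$ unless it is absorbed, actually cannot occur because $L/I$ semisimple and $L = S \dot{+} I$ with $[I,S]=I$ forces every irreducible piece of $I$ to be acted on nontrivially by some $S_j$. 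Collecting summands by that index $j$ recovers $I_j$ and exhibits $I = I_1 \oplus \dots \oplus I_k$. Combining the direct-sum decomposition of $I$ with the pairwise annihilation of the ideals $S_j \dot{+} I_j$ yields $L = \bigoplus_{j=1}^{k}(S_j \dot{+} I_j)$, and each summand $S_j \dot{+} I_j$ is a Lie-simple (indeed simple, by Theorem \ref{t1} and the description following it, since $I_j$ is irreducible) Leibniz algebra, completing the proof.
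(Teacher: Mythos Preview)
Your overall plan matches the paper's: use Lemma \ref{l1} to get that each $S_j \dot{+} I_j$ is an ideal, then use Lemma \ref{l2} to kill the cross-actions $[I_j,S_i]$ for $i\neq j$. For $j\leq k-1$ this is exactly what the paper does, and your pairwise-annihilation and directness remarks are fine (once all cross-vanishings hold, directness follows since $v\in I_j\cap\sum_{i\neq j}I_i$ would be annihilated by $S_j$, contradicting irreducibility and $[I_j,S_j]=I_j$).

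The gap is in the case $j=k$. You assert that Lemma \ref{l2} applies to $(S_k\oplus sl_2^i)\dot{+}I_k$ with $S_k$ in the role of the distinguished $sl_2$-factor, ``provided Lemma \ref{l2}'s proof only used that the first factor carries an irreducible module, which it does.'' That proviso is false. The proof of Lemma \ref{l2} is an explicit computation in the canonical basis $\{e,f,h,x_0,\dots,x_m\}$ of an irreducible $sl_2$-module; it uses the specific relations $[x_k,f]=x_{k+1}$, $[x_0,e]=0$, $[x_k,h]=(m-2k)x_k$ to force each $y\in S$ to act on $I$ as a scalar. None of this is available when $S_k$ is an arbitrary simple Lie algebra, and the statement of Lemma \ref{l2} as written does not cover it. (A Schur's-lemma replacement would need the ground field to be algebraically closed, which the paper does not assume.)

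The paper closes this case by a different, short argument you should incorporate: assume $[I_k,sl_2^j]\neq 0$ for some $j\leq k-1$. Since $I_k$ is an ideal, $[I_k,sl_2^j]\subseteq I_k$; on the other hand the Leibniz identity gives
\[
[I_k,sl_2^j]=[[I,S_k],sl_2^j]\subseteq [I,[S_k,sl_2^j]]+[[I,sl_2^j],S_k]=[I_j,S_k]\subseteq I_j,
\]
using $[I_j,S_k]=0$ (which you have already, from Lemma \ref{l2} applied with $sl_2=sl_2^j$). Hence $0\neq I_k\cap I_j$, and this intersection is an ideal, so it is a nonzero submodule of both the irreducible $sl_2^j$-module $I_j$ and the irreducible $S_k$-module $I_k$. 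Irreducibility forces $I_j=I_k$, whence $[I_k,S_k]=[I_j,S_k]=0$, contradicting $[I_k,S_k]=I_k$. This yields $[I_k,sl_2^j]=0$ without any appeal to Lemma \ref{l2} in the unavailable direction.
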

\begin{proof} By Lemma \ref{l1} it is known that $sl_2^j+I_j$ and
$S_k+I_k$ are ideals of $L.$ Since $I_j$ is irreducible over
$sl_2^j,$ then by Lemma \ref{l2} we obtain $[I_j, sl_2^i]=0$ for all $i,j \ (i \neq j).$

Thus, we have $$[sl_2^i+I_i, sl_2^j+I_j] = 0, \quad i \neq j,
\ 1 \leq i,j \leq k-1.$$

Moreover, from Lemma \ref{l2} we have $[I_i, S_k+I_k] = 0$ for
$1 \leq i \leq k-1.$ In order to complete the proof of theorem it is
necessary to establish the equality $[I_k, sl_2^j]=0.$

Let us assume the contrary, i.e. $[I_k, sl_2^j] \neq 0,$ for some
$j \ (1 \leq j \leq k-1).$ Since $I_k$ is an ideal of $L$ and $I_k
\subseteq I,$ then we have $[I_k, sl_2^j]\subseteq I_k.$

From
$$[I_k, sl_2^j] = [[I,S_k], sl_2^j]\subseteq [I,[S_k, sl_2^j]] +
[[I,sl_2^j],S_k] = [I_j,S_k] \subseteq I_j,$$ we obtain $[I_k,
sl_2^j]\subseteq I_j.$

Hence, we get $I_k\cap I_j \neq 0.$ Since $I_k\cap I_j$ is an ideal
of the algebra $L,$ then it can be considered as the right module over
$sl_2^j$ and $S_k.$ Due to $I_j$ and $I_k$ are irreducible, we have
$I_k \cap I_j = I_j = I_k.$ From $[I_j, S_k] = 0$ we derive $[I_k,
S_k] = 0,$ but it is a contradiction with condition
$$[I_k, S_k] = I_k.$$
Therefore, we have $$[I_k, sl_2^j] = 0.$$

Thus, we get that $[S_k+ I_k, sl_2^j+I_j] = [sl_2^j+I_j, S_k+ I_k]
= 0,$ which leads that the Leibniz algebra $L$ is decomposed into
direct sum of simple ideals.
\end{proof}

In Example 2, it is shown that if $I_j$ is reducible over $S_j,$
then the semisimple Leibniz algebra is not decomposable to direct
sum of simple ideals. However this algebra is decomposed into
direct sum of Lie-simple algebras.

Naturally arises a question: whether any semisimple Leibniz
algebras can be represented as a direct sum of Lie-simple Leibniz
algebras.

The following example gives the negative answer to this question.

\textbf{Example 3.} Let $L$ be a $10-$ dimensional semisimple
Leibniz algebra. Let $\{e_1, h_1, f_1, e_2, h_2, f_2, x_1, x_2,\\
x_3, x_4\}$ be a basis of the algebra $L$ such that $I=\{x_1, x_2, x_3, x_4\},$
and multiplication table of $L$ has the following
form:
$$[sl_2^i, sl_2^i]: \quad \begin{array}{lll}
\ [e_i,h_i]=2e_i, & [f_i,h_i]=-2f_i, & [e_i,f_i]=h_i, \\
\ [h_i,e_i]=-2e_i & [h_i,f_i]=2f_i,  & [f_i,e_i]=-h_i, \ i=1, 2, \\
 \end{array}$$
$$[I,sl_2^1]: \quad \begin{array}{llll}
\, [x_1,f_1]=x_2, & [x_1,h_1]=x_1, & [x_2,e_1]=-x_1, & [x_2,h_1]=-x_2, \\
\, [x_3,f_1]=x_4, & [x_3,h_1]=x_3, & [x_4,e_1]=-x_3, & [x_4,h_1]=-x_4, \\
 \end{array}$$
$$[I,sl_2^2]: \quad \begin{array}{llll}
\, [x_1,f_2]=x_3, & [x_1,h_2]=x_1, & [x_3,e_2]=-x_1, & [x_3,h_2]=-x_3, \\
\, [x_2,f_2]=x_4, & [x_2,h_2]=x_2, & [x_4,e_2]=-x_2, & [x_4,h_2]=-x_4, \\
 \end{array}$$
(omitted products are equal to zero).

From this table of multiplications we have $[I,sl_2^1] =[I,sl_2^2] =I.$
Moreover, $I$ splits over $sl_2^1$ (i.e. $I=\{x_1,x_2\}\oplus \{x_3,x_4\}$) and
over $sl_2^2$ (i.e. $I=\{x_1,x_3\}\oplus \{x_2,x_4\}$). Therefore, $$L=(sl_2^1\oplus
sl^2_2)\dot{+}I \neq (sl_2^1\dot{+} I_1)\oplus
(sl^2_2\dot{+}I_2).$$

Below we find some types of semisimple Leibniz algebras which are
decomposed into a direct sum of Lie-simple Leibniz algebras.

Let $L$ be a semisimple Leibniz algebra such that $L=(sl_2\oplus
S) \dot{+} I,$ where $S$ is a simple Lie algebra. Let $I_1 = [I,
sl_2]$ be an irreducible over $sl_2,$ then according to Theorem
\ref{mod1} the module $I_1$ is a completely reducible, i.e.
$I_1=I_{1,1}\oplus I_{1,2} \oplus \dots \oplus I_{1,p},$ where $
I_{1,i}$ is irreducible over $sl_2^1.$

Let us consider the case of $p=2.$
\begin{prop}\label{prop1}
If $dim I_{1,1} \neq dim I_{1,2},$ then $L= (sl_2\dot{+} I_1)
\oplus (S \dot{+} I_2).$
\end{prop}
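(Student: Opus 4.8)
The plan is to split off the $sl_2$-isotypic pieces of $I_1$ one at a time and reduce each time to Lemma~\ref{l2}. Put $I_2=[I,S]$; by Lemma~\ref{l1} the subspaces $sl_2+I_1$ and $S+I_2$ are ideals of $L$, $I=I_1+I_2$, and $[I_1,sl_2]=I_1$. Since $I$ lies in the right annihilator of $L$ one has $[L,I]=0$ and $[I,I]=0$, so the cross products $[sl_2,I_2]$, $[S,I_1]$, $[I_1,I_2]$ and $[I_2,I_1]$ all vanish; together with $[sl_2,S]=[S,sl_2]=0$, the proposition reduces to the three assertions
$$[I_1,S]=0,\qquad [I_2,sl_2]=0,\qquad I_1\cap I_2=0.$$
Indeed, granting these, $L=sl_2+S+I_1+I_2$ and $sl_2\cap S=0$ give $L=(sl_2+I_1)\oplus(S+I_2)$ as vector spaces, while the vanishing of all the above products shows the two ideals annihilate each other, so $L=(sl_2\dot{+}I_1)\oplus(S\dot{+}I_2)$ as Leibniz algebras.

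The first assertion is the heart of the matter. Using the Leibniz identity and $[sl_2,S]=0$ one checks that the right actions of $sl_2$ and of $S$ on $I$ commute, i.e.\ $[[m,a],b]=[[m,b],a]$ for $m\in I$, $a\in sl_2$, $b\in S$. Since $I_1$ is an ideal of $L$ it is an $sl_2\oplus S$-submodule of $I$, and for fixed $b\in S$ the map $m\mapsto[m,b]$ is an $sl_2$-module endomorphism of $I_1=I_{1,1}\oplus I_{1,2}$. Because $\dim I_{1,1}\neq\dim I_{1,2}$, the two summands are non-isomorphic irreducible $sl_2$-modules, so Schur's lemma forces this endomorphism to preserve each of them; hence $[I_{1,i},S]\subseteq I_{1,i}$. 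Each $I_{1,i}$ is a nontrivial irreducible $sl_2$-module (using $I_1=[I_1,sl_2]$), so $[I_{1,i},sl_2]=I_{1,i}$, and with $[L,I]=0$ this shows each $I_{1,i}$ is an ideal of $L$. Then $M_i:=(sl_2\oplus S)\dot{+}I_{1,i}$ is a subalgebra of $L$; it is a semisimple Leibniz algebra whose canonical ideal (the span of its squares) equals $I_{1,i}$, which is irreducible over $sl_2$. Applying Lemma~\ref{l2} to $M_i$ gives $[I_{1,i},S]=0$ for $i=1,2$, hence $[I_1,S]=0$.

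The remaining assertions follow easily. From $[I_1,S]=0$ and the Leibniz identity, $[I_2,sl_2]=[[I,S],sl_2]\subseteq[I,[S,sl_2]]+[[I,sl_2],S]=[I_1,S]=0$. For the last one, $I_1\cap I_2$ is an $sl_2$-submodule of $I_1$ contained in $I_2$, and $[I_2,sl_2]=0$ says $sl_2$ acts trivially on it; but $I_1=[I,sl_2]$ contains no nonzero trivial $sl_2$-submodule, so $I_1\cap I_2=0$. This establishes all three assertions, and with them the proposition.

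The step I expect to be the real obstacle is the second paragraph — more precisely, recognising that the hypothesis $\dim I_{1,1}\neq\dim I_{1,2}$ is exactly what is needed: it is this inequality that, via Schur's lemma, forces the commuting $S$-action to respect the $sl_2$-isotypic decomposition of $I_1$, which in turn makes each $I_{1,i}$ an ideal and lets Lemma~\ref{l2} be invoked piecewise. If the two dimensions agreed, $S$ could mix the two summands and the argument would break down; this is precisely the behaviour realised in Example~3.
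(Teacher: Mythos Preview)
Your proof is correct and takes a genuinely different route from the paper's. The paper proves $[I_1,S]=0$ by an explicit basis computation: it fixes the standard basis $\{e,f,h,x_k^j\}$ of $sl_2+I_1$, writes $[x_0^i,y]=\sum\alpha_{i,r}^j x_r^j$ for $y\in S$, and then grinds through the Leibniz identities with $e,f,h$ to pin down the coefficients; the hypothesis $t_1\neq t_2$ enters when the $h$-weights are compared to kill the off-diagonal term $\alpha_{1,0}^2$. Having obtained $[x_k^i,y_j]=\alpha_{i,j}x_k^i$, the paper finishes exactly as Lemma~\ref{l2} does, via $[x_k^i,[y_p,y_q]]=0$ and $[S,S]=S$.

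Your argument replaces all of this with a clean module-theoretic reduction: the commuting of the $sl_2$- and $S$-actions on $I$ makes each right multiplication by $y\in S$ an $sl_2$-endomorphism of $I_1$, and Schur's lemma (here is precisely where $\dim I_{1,1}\neq\dim I_{1,2}$ is used) forces it to preserve the isotypic pieces. This lets you invoke Lemma~\ref{l2} on each summand rather than redo its computation. Your approach is shorter, explains conceptually why the dimension hypothesis matters, and generalises immediately to $p>2$ pairwise non-isomorphic summands (Theorem~\ref{t3}) without further calculation. You also explicitly verify $I_1\cap I_2=0$, which the paper's proof leaves implicit; your argument for it (no trivial $sl_2$-submodule inside $I_1=[I_1,sl_2]$) is correct.
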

\begin{proof}
Let $I_1=I_{1,1}\oplus I_{1,2},$ then there exists a basis $\{e, h,
f, x_0^{1}, x_1^{1}, \dots ,x_{t_1}^{1}, x_0^{2}, x_1^{2}, \dots
,x_{t_2}^{2}\}$ of $sl_2+I_1,$ such that

$$[I_1,sl_2]: \quad \begin{array}{ll}
\, [x_k^j,h]=(t_j-2k)x_k^j, & 0 \leq k \leq t_j, \\
\, [x_k^j,f]=x_{k+1}^j,  & 0 \leq k \leq t_j-1, \\
\, [x_k^j,e]=-k(t_j+1-k)x_{k-1}^j, & 1 \leq k \leq t_j, \ j=1, 2.\\
 \end{array}$$
where $I_{1,1}=\{x_0^1,\dots,x_{t_1}^1\},$ $I_{1,2}=\{
x_0^2,\dots,x_{t_2}^2\}.$

Without loss of generality we can assume that $t_1 > t_2.$

Let $\{y_1, y_2, \dots, y_m\}$ be a basis of the algebra $S.$

We put $$[x_0^i, y_1]
= \sum\limits_{j=1}^2\sum\limits_{r=0}^{t_j} \alpha_{i,r}^jx_r^j,
\quad 1 \leq i \leq 2.$$

Consider equalities $$[[x^1_0, f],y_1] = [x_0^1, [f,
y_1]] + [[x_0^1, y_1], f] =
[\sum\limits_{j=1}^2\sum\limits_{r=0}^{t_j} \alpha_{1,r}^jx_r^j,
f] = \sum\limits_{j=1}^2\sum\limits_{r=0}^{t_j-1}
\alpha_{1,r}^jx_{r+1}^j.$$

On the other hand $$[[x^1_0, f],y_1] = [x^1_1,y_1].$$

Hence, we get
$$[x_1^1, y_1] = \sum\limits_{j=1}^2\sum\limits_{r=0}^{t_j-1}
\alpha_{1,r}^jx_{r+1}^j.$$

From equalities
$$[x_k^1, y_1]=[[x^1_{k-1}, f],y_1] = [x_{k-1}^1, [f,
y_1]] + [[x_{k-1}^1, y_1], f]$$ and induction we derive
$$[x_k^1, y_j] = \sum\limits_{j=1}^2\sum\limits_{r=0}^{t_j-k}
\alpha_{1,r}^jx_{r+k}^j, \quad 1 \leq k \leq t_2,$$
$$[x_k^1, y_j] = \sum\limits_{r=0}^{t_1-k}
\alpha_{1,r}^jx_{r+k}^j, \quad t_2+1 \leq k \leq t_1.$$

Consider the products $$[[x^1_0, e],y_1] = [x_0^1, [e, y_1]] +
[[x_0^1, y_1], e] = [[x_0^1, y_1], e]=
$$ $$=[\sum\limits_{j=1}^2\sum\limits_{r=0}^{t_j} \alpha_{1,r}^jx_r^j,
e] = \sum\limits_{j=1}^2\sum\limits_{r=1}^{t_j}
\alpha_{1,r}^j(-r(t_j+1-r))x_{r-1}^j.$$

On the other hand $$[[x^1_0, e],y_1] = 0.$$

Comparing the coefficients at the basis elements, we obtain
$$\alpha_{1,r}^j=0, \quad 1 \leq j \leq 2, \ 1\leq r \leq t_j.$$

Thus, we have
$$[x_k^1, y_1] = \alpha_{1,0}^1x_{k}^1+\alpha_{1,0}^2x_{k}^2, \quad 0 \leq k \leq t_2,$$
$$[x_k^1, y_1] = \alpha_{1,0}^1x_{k}^1, \quad t_2+1 \leq k \leq t_1.$$

Similarly, from the products $[[x_k^2,f],y_1]$ for $0 \leq k \leq t_2-1$ and $[[x_0^2,e],y_1]$
we obtain
$$[x_k^2, y_1] = \alpha_{2,0}^1x_{k}^1+\alpha_{2,0}^2x_{k}^2, \quad 0 \leq k \leq t_2,$$

Consider
$$[[x^2_{t_2}, f],y_1] = [x^2_{t_2}, [f, y_1]]
+ [[x^2_{t_2}, y_1], f] =
[\alpha_{2,0}^1x_{t_2}^1+\alpha_{2,0}^2x_{t_2}^2, f] =
\alpha_{2,0}^1x_{t_2+1}^1.$$

From the equality $[x^2_{t_2}, f] = 0$ we obtain
$\alpha_{2,0}^1=0.$ Thus, we get
$$[x_k^2, y_1] = \alpha_{2,0}^2x_{k}^2, \quad 0 \leq k \leq t_2.$$

Consider the products $$[[x^1_0, h],y_1] = [x^1_0, [h, y_1]]
+ [[x^1_0, y_1], h] = [\alpha_{1,0}^1x^1_0+\alpha_{1,0}^2x^2_0, h]
= t_1\alpha_{1,0}^1x^1_0 + t_2\alpha_{1,0}^2x^2_0.$$

On the other hand $$[[x^1_0, h],y_1] = [t_1x^1_0, y_1] =
t_1\alpha_{1,0}^1x^1_0 + t_1\alpha_{1,0}^2x^2_0.$$

Comparing the coefficient at the basis elements, we have $(t_1 -
t_2)\alpha_{1,2}=0.$ The condition $t_1 \neq t_2,$
implies $\alpha_{1,2}=0.$

Thus, we can assume that
$$[x_k^1, y_1] = \alpha_{1,1}x_{k}^1, \quad 0 \leq k \leq t_1,$$
$$[x_k^2, y_1] = \alpha_{2,1}x_{k}^2, \quad 0 \leq k \leq t_2.$$

In a similar way as above we obtain
$$[x_k^i, y_j] = \alpha_{i,j}x_{k}^i, \quad 1 \leq i \leq 2, \ 0 \leq k \leq t_i, \ 1 \leq j \leq m.$$

Consider the equalities
$$[x_k^i,[y_p, y_q]] = [[x_k^i,y_p], y_q] - [[x_k^i,y_q], y_p] =
[\alpha_{i,p}x_k^i,y_q] -
[\alpha_{i,q}x_k^i,y_p]=\alpha_{i,p}\alpha_{i,q}x_k^i -
\alpha_{i,q}\alpha_{i,p}x_k^i =0.$$

Taking into account the property $[S,S] =S$ and
arbitrariness of the elements $\{x_k^i, y_p, y_q\}$ we obtain $[I,[S,S]] = [I,S] =
0.$

Moreover, $[I_2, sl_2]=0.$ Indeed,
$$[I_2, sl_2] = [[I_2,S], sl_2]\subseteq [I_2, [S, sl_2]] + [[I_2,sl_2], S]
= [[I_2,sl_2], S] \subseteq [I_1, S] =0.$$

Thus, we have proved that the semisimple Leibniz algebra $L$ is
decomposed into the direct sum of two Lie-simple Leibniz algebras,
i.e. $L=(sl_2\dot{+} I_1)\oplus (S\dot{+}I_2).$
\end{proof}

Let $L$ be a semisimple Leibniz algebra such that $L=(sl_2^1
\oplus sl_2^2 \oplus \dots sl_2^1 \oplus sl_2^{k-1} \oplus
S_k)\dot{+}I$ and $I_j$ is a reducible module over $sl_2^j.$ Then
the module $I_j$ is a completely reducible over $sl_2^j$, i.e.
$I_j=I_{j,1}\oplus I_{j,2} \oplus \dots \oplus I_{j,p_j},$ where $
I_{j,i}$ is an irreducible over $sl_2^j.$

We generalize the above Proposition \ref{prop1} and define types
of semisimple Leibniz algebras which are decomposed into direct
sum of Lie-simple ones.

\begin{thm}\label{t3} If $dim I_{j,r} \neq dim I_{j,q}$ for any $1 \leq j \leq k-1, \ 1 \leq r, q \leq
p_j, \ p\neq q$ then $$L = (sl_2^1\dot{+}I_1)\oplus (sl_2^2\dot{+}I_2)\oplus
\dots \oplus (sl_2^{k-1}\dot{+}I_{k-1})\oplus (S_k \dot{+} I_k).$$
\end{thm}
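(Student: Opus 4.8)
The plan is to reduce Theorem \ref{t3} to the combination of Lemma \ref{l2} and Proposition \ref{prop1}, handled component by component. By Lemma \ref{l1}, each $sl_2^j + I_j$ (for $1 \le j \le k-1$) and $S_k + I_k$ is an ideal of $L$, and $I = I_1 + I_2 + \dots + I_k$. So it suffices to prove that the pairwise products $[sl_2^i + I_i, \, sl_2^j + I_j]$ vanish for $i \ne j$, and likewise $[sl_2^j + I_j, \, S_k + I_k] = 0$; once these are established, the sum $I_1 + \dots + I_k$ is forced to be direct (any nonzero intersection $I_i \cap I_j$ would be a nonzero ideal annihilated by everything outside $sl_2^i \oplus sl_2^j$, and one reaches a contradiction with $[I_i, sl_2^i] = I_i$ exactly as in the proof of the preceding theorem), and the decomposition into Lie-simple ideals follows.

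The key step is to show $[I_j, sl_2^i] = 0$ for $i \ne j$, and $[I_j, S_k] = 0$, using the hypothesis that the irreducible $sl_2^j$-submodules of $I_j$ all have pairwise distinct dimensions. First I would apply Theorem \ref{mod1} to write $I_j = I_{j,1} \oplus \dots \oplus I_{j,p_j}$ with each $I_{j,r}$ irreducible over $sl_2^j$ of dimension $t_{j,r}+1$, all the $t_{j,r}$ distinct. Now fix $i \ne j$ (or $i = k$, the case of $S_k$). The algebra $sl_2^j + I_j$ together with the action of a single basis element $y$ of the other simple factor gives exactly the setup of the computation inside Proposition \ref{prop1}: running the Leibniz identity $[[x^r_{s-1}, f_j], y] = [x^r_{s-1}, [f_j, y]] + [[x^r_{s-1}, y], f_j]$ down each string, then $[[x^r_0, e_j], y] = 0$, then comparing the $h_j$-weights, forces $[x^r_s, y] = \alpha_r x^r_s$ with the scalar $\alpha_r$ depending only on $r$ — and crucially the "weight-matching" argument that killed the cross-terms between $I_{1,1}$ and $I_{1,2}$ in Proposition \ref{prop1} goes through for every pair $(r,q)$ precisely because $t_{j,r} \ne t_{j,q}$. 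Then the identity $[x, [y, y']] = [[x,y],y'] - [[x,y'],y]$ with $[S_i, S_i] = S_i$ (resp. $[S_k, S_k] = S_k$) yields $[I_j, sl_2^i] = 0$ (resp. $[I_j, S_k] = 0$).

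Having obtained $[I_j, sl_2^i] = 0$ for all $i \ne j$ and $[I_j, S_k] = 0$, I still need the reverse inclusions $[I_k, sl_2^j] = 0$. This I would argue exactly as in the proof of the previous theorem: if $[I_k, sl_2^j] \ne 0$ then, using $I_k = [I, S_k]$ and the Leibniz identity, $[I_k, sl_2^j] \subseteq [I_j, S_k] + [I_k, S_k] $-type manipulations give $[I_k, sl_2^j] \subseteq I_j$, so $I_k \cap I_j \ne 0$; since this intersection is an ideal, it is a submodule over both $sl_2^j$ and $S_k$, and irreducibility of (a suitable summand of) $I_j$ and of $I_k$ forces it to be all of $I_k$, whence $[I_k, S_k] = [I_j, S_k] = 0$, contradicting Lemma \ref{l1}(c) which says $[I_k, S_k] = I_k \ne 0$. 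Combining all vanishing statements, every cross-product $[sl_2^i + I_i, \, sl_2^j + I_j]$ and $[sl_2^j + I_j, \, S_k + I_k]$ is zero, the sum of the ideals is direct, and $L$ decomposes as claimed, each summand being Lie-simple by the structure theorem of \cite{Rakh}.

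The main obstacle is purely bookkeeping: the computation in Proposition \ref{prop1} was done for $p = 2$ with the single nonzero cross-term between two strings, and here one must run it simultaneously over $p_j$ strings and check that the distinct-dimension hypothesis kills every off-diagonal coefficient $\alpha^q_{r,\cdot}$ (cross-term from string $r$ into string $q$) via the $h_j$-weight comparison $(t_{j,r} - t_{j,q})\alpha = 0$. No genuinely new idea is needed beyond what appears in Lemma \ref{l2} and Proposition \ref{prop1}; the theorem is the natural common generalization of both, and the write-up can legitimately say "arguing as in Proposition \ref{prop1} for each irreducible summand" rather than repeating the string calculations in full.
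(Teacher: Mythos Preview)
Your overall strategy coincides with the paper's: both reduce to showing that the cross-products between the ideals $sl_2^j + I_j$ and $S_k + I_k$ vanish, and both obtain $[I_j, S_k] = 0$ (and, by the same computation with any $sl_2^i$ in place of $S_k$, $[I_j, sl_2^i] = 0$ for $i\ne j$) by running the Proposition~\ref{prop1} string calculation across all $p_j$ irreducible summands of $I_j$, using the pairwise-distinct-dimension hypothesis to kill every off-diagonal coefficient via the $h_j$-weight comparison $(t_{j,r}-t_{j,q})\alpha = 0$.

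The one place you diverge is the final step $[I_k, sl_2^j] = 0$. You propose the contradiction argument from the preceding theorem, but that argument relied on the irreducibility of $I_k$ over $S_k$, which is \emph{not} assumed in Theorem~\ref{t3}. Your parenthetical ``(a suitable summand of)'' attached to $I_j$ does not repair this; you would also have to pass to an irreducible $S_k$-summand inside $I_k\cap I_j$ and then argue separately that it cannot be a trivial $S_k$-module. The paper sidesteps all of this with the one-line Leibniz-identity estimate already used at the end of Proposition~\ref{prop1}:
\[
[I_k, sl_2^j] \;=\; [[I_k, S_k], sl_2^j] \;\subseteq\; [I_k,[S_k, sl_2^j]] + [[I_k, sl_2^j], S_k] \;\subseteq\; 0 + [I_j, S_k] \;=\; 0,
\]
which needs only Lemma~\ref{l1}(c) and the vanishing $[I_j, S_k]=0$ you have already established. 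So your plan is correct in outline; just replace the contradiction argument for $[I_k, sl_2^j]=0$ by this direct inclusion.
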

\begin{proof}

In order to prove theorem it is sufficient to prove
$$[sl_2^{j}+I_{j}, S_k + I_k] = [S_k + I_k, sl_2^{j}+I_{j}] = 0.$$
Without loss of generality, we can suppose $j=1$ and
$I_1=I_{1,1}\oplus I_{1,2}\oplus \dots \oplus I_{1,p}.$ Let $\{e,
h, f, x_0^{1}, x_1^{1}, \dots ,x_{t_1}^{1}, x_0^{2}, x_1^{2},
\dots ,x_{t_2}^{2}, \dots x_0^{p}, x_1^{p}, \dots ,x_{t_p}^{p}\}$
be a basis of $sl_2^1+I_1$ such that
$$\begin{array}{ll}
\, [x_k^j,h]=(t_j-2k)x_k^j, & 0 \leq k \leq t_j, \\
\, [x_k^j,f]=x_{k+1}^j,  & 0 \leq k \leq t_j-1, \\
\, [x_k^j,e]=-k(t_j+1-k)x_{k-1}^j, & 1 \leq k \leq t_j,\\
 \end{array}$$
where $I_{1,j}=\{x_0^j, x_0^j, \dots,x_{t_j}^j\},$ $1 \leq j \leq
p.$

Since $dim I_{1,r} \neq dim I_{1,q},$ then without loss of
generality we can assume that $t_1
> t_2 > \dots > t_p.$

Let $\{y_1, y_2, \dots, y_m\}$ be a basis of $S_k.$  Put
$$[x_0^i, y_1] = \sum\limits_{j=1}^p\sum\limits_{r=0}^{t_j} \alpha_{i,r}^jx_r^j, \quad 1 \leq i \leq p.$$

Similarly as in the proof of the Proposition \ref{prop1}
considering Leibniz identity
$$[[x^1_k, f],y_1] = [x_k^1, [f, y_1]] + [[x_k^1, y_1], f],$$ we obtain
$$[x_k^1, y_1] = \sum\limits_{j=1}^p\sum\limits_{r=0}^{t_j-k}
\alpha_{1,r}^jx_{r}^j, \quad 0\leq k \leq t_p,$$
$$[x_k^1, y_1] = \sum\limits_{j=1}^{p-q}\sum\limits_{r=0}^{t_j-k}
\alpha_{1,r}^jx_{r}^j, \quad 1 \leq q \leq p-1, \ t_{p-q+1}+1 \leq
k \leq t_{p-q}.$$

Consider the equalities $$[[x^1_0, e],y_1] = [x_0^1, [e, y_1]] +
[[x_0^1, y_1], e] = [[x_0^1, y_1], e]=
$$ $$=[\sum\limits_{j=1}^p\sum\limits_{r=0}^{t_j} \alpha_{1,r}^jx_r^j,
e] = \sum\limits_{j=1}^p\sum\limits_{r=1}^{t_j}
\alpha_{1,r}^j(-r(t_j+1-r))x_{r-1}^j.$$

On the other hand $$[[x^1_0, e],y_1] = 0.$$

Comparing the coefficients at the basis elements, we get
$$\alpha_{1,r}^j=0, \quad 1 \leq j \leq p, \ 1\leq r \leq t_j.$$
Thus, we have
$$[x_k^1, y_1] = \sum\limits_{j=1}^p\alpha_{1,j}^1x_{k}^j, \quad 0 \leq k \leq t_p,$$
$$[x_k^1, y_1] = \sum\limits_{j=1}^{p-q}\alpha_{1,j}^1x_{k}^j, \quad 1 \leq q \leq p-1, \ t_{p-q+1}+1 \leq k \leq t_{p-1}.$$

Consider the equalities $$[[x^1_0, h],y_1] = [x^1_0, [h, y_1]] +
[[x^1_0, y_1], h] = [\sum\limits_{j=1}^p\alpha_{1,j}^1x_{0}^j, h]
= \sum\limits_{j=1}^p t_j \alpha_{1,j}^1x_{0}^j.$$

On the other hand $$[[x^1_0, h],y_1] = [t_1x^1_0, y_1] =
t_1\sum\limits_{j=1}^p\alpha_{1,j}^1x_{0}^j.$$

Comparing the coefficient at the basis elements, we have $(t_1 -
t_j)\alpha_{1,j}=0.$ The condition $t_1 \neq t_j$ implies that $\alpha_{1,j}=0$ for all $2 \leq j \leq p.$

Thus, rewriting the index of the coefficients, we obtain
$$[x_k^1, y_1] = \alpha_{1,1}x_{k}^1, \quad 0 \leq k \leq t_1,$$
Similarly we obtain
$$[x_k^i, y_j] = \alpha_{i,j}x_{k}^i, \quad 1 \leq i \leq p, \ 0 \leq k \leq t_i, \ 1 \leq j \leq m.$$

Consider the products
$$[x_k^i,[y_p, y_q]] = [[x_k^i,y_p], y_q] - [[x_k^i,y_q], y_p] =
[\alpha_{i,p}x_k^i,y_q] -
[\alpha_{i,q}x_k^i,y_p]=\alpha_{i,p}\alpha_{i,q}x_k^i -
\alpha_{i,q}\alpha_{i,p}x_k^i =0.$$

From the arbitrariness of elements $\{x_k^i, y_p, y_q\}$ and condition $[S_k,S_k] =S_k$ we have
$[I_1,[S_k,S_k]] = [I_1,S_k] = 0.$

From
$$[I_k, sl_2^1] = [[I_k,S_k], sl_2]\subseteq [I_k, [S_k, sl_2^1]] + [[I_k,sl_2^1], S_k]
= [[I_k,sl_2^1], S_k] \subseteq [I_1, S_k] =0$$ we get
$[I_k, sl_2^1]=0.$

Thus, we obtain
$$[sl_2^{j}+I_{j}, S_k + I_k] = [S_k+I_k, sl_2^{j}+I_{j}] = 0.$$
\end{proof}

Analyzing the proof of the Theorem \ref{t3} we obtain the result, which generalize the Example 3.

\begin{thm} Let $L$ be a semisimple Leibniz algebra such that $L=(sl_2\oplus S)
\dot{+}I$ and $I_1 = [I,sl_2]$ is a reducible over $sl_2.$ Let
$I_1 = I_{1,1}\oplus I_{1,2}\oplus \dots \oplus I_{1,p},$ where
$I_{1,j}$ is an irreducible over $sl_2.$ If $$dim I_{1,j_1} = dim
I_{1,j_2} = \dots dim I_{1,j_s} =t+1,$$ then there exist $(t+1)$
pieces of $s$-dimensional submodules $I_{2,1}, I_{2,2}, \dots
I_{2,t+1}$ of module $I_2=[I, S]$ (i.e. $dim I_{2,i} =s, \ 1 \leq i
\leq t+1$) such that
$$I_{2,1}+ I_{2,2}+ \dots +I_{2,t+1} = I_1 \cap I_2.$$
\end{thm}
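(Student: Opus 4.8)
The plan is to deduce the statement from the structure of $I$ as a module over the semisimple Lie algebra $sl_2\oplus S$. Since $I$ is an ideal of $L$ contained in the right annihilator of $L$, the bracket makes $I$ a right $L/I$-module, i.e. an $(sl_2\oplus S)$-module, which is completely reducible by Theorem \ref{mod1}; moreover $I=I_1+I_2=[I,sl_2]+[I,S]=[I,sl_2\oplus S]$ by Lemma \ref{l1}, so $I$ has no irreducible constituent annihilated by all of $sl_2\oplus S$. Each irreducible $(sl_2\oplus S)$-module is an external tensor product $V\otimes W$ with $sl_2$ acting on the irreducible $V$ and $S$ on the irreducible $W$. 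Collecting the constituents of $I$ whose $sl_2$-factor is the $(t+1)$-dimensional simple module, I obtain a submodule $B\subseteq I_1$ which, as an $sl_2$-module, is $I_{1,j_1}\oplus\dots\oplus I_{1,j_s}$, and which, as an $(sl_2\oplus S)$-module, is $s$ copies of that simple $sl_2$-module with $S$ acting only on the $s$-dimensional multiplicity space; call this $S$-module $\widetilde W$, so $\dim\widetilde W=s$. (Here $t\geq 1$, since a $1$-dimensional $sl_2$-submodule would be trivial, hence killed by $sl_2$ and not contained in $I_1=[I,sl_2]$.)

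The first real step is to make the $S$-action on $B$ explicit, and this is where I expect the main obstacle to lie. Fix a canonical basis $v_0,v_1,\dots,v_t$ of the $(t+1)$-dimensional simple $sl_2$-module, so $[v_k,h]=(t-2k)v_k$, $[v_k,f]=v_{k+1}$, $[v_k,e]=-k(t+1-k)v_{k-1}$, and a basis $w_1,\dots,w_s$ of $\widetilde W$, so that $\{v_k\otimes w_a\}$ is a basis of $B$. Running the Leibniz identities $[[v_k\otimes w_a,f],y]$, $[[v_k\otimes w_a,e],y]$ and $[[v_k\otimes w_a,h],y]$ for $y\in S$, exactly as in the proofs of Proposition \ref{prop1} and Theorem \ref{t3} and using $[sl_2,S]=0$, shows that the $S$-action commutes with the $sl_2$-action: the $h$-weight comparison kills any component of $[v_0\otimes w_a,y]$ lying outside the $(t+1)$-dimensional isotypic piece, and the $f$- and $e$-identities propagate this to all levels, so that $[v_k\otimes w_a,y]=v_k\otimes\rho(y)w_a$ for a linear operator $\rho(y)$ of $\widetilde W$ independent of $k$; the module axiom for $I$ then makes $y\mapsto\rho(y)$ a representation of $S$ on $\widetilde W$. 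This step is routine given the template already in the paper, but it is the technical heart of the argument.

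Finally, for $k=0,1,\dots,t$ I would set
$$I_{2,k+1}:=[\,v_k\otimes\widetilde W,\ S\,]=v_k\otimes\rho(S)\widetilde W .$$
Each $I_{2,k+1}$ lies in $[I,S]=I_2$ and is $S$-stable, since $\rho(S)\widetilde W$ is the sum of the nontrivial irreducible constituents of $\widetilde W$ and hence an $S$-submodule; so each $I_{2,k+1}$ is a submodule of $I_2$ of dimension $\dim\rho(S)\widetilde W$, which equals $s$ exactly when $\widetilde W$ has no trivial $S$-summand, equivalently when $B\subseteq I_2$, as happens in Example 3. As $v_0,\dots,v_t$ are linearly independent, $I_{2,1}+\dots+I_{2,t+1}$ is the span of the vectors $v_k\otimes w'$ with $w'\in\rho(S)\widetilde W$, that is, the sum of those constituents of $B$ with nontrivial $S$-factor, which is $B\cap I_2$; since such a constituent also has nontrivial $sl_2$-factor, this equals $B\cap I_1\cap I_2$, and identifying $I_1\cap I_2$ with its $(t+1)$-isotypic component $B\cap I_1\cap I_2$ yields $I_{2,1}+\dots+I_{2,t+1}=I_1\cap I_2$. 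The delicate bookkeeping here — that the $I_{2,i}$ are genuinely $s$-dimensional and that their sum exhausts $I_1\cap I_2$ — is precisely where the module decomposition of the first paragraph is used.
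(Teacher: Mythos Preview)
Your approach is essentially the same as the paper's. Both reduce to the Leibniz-identity computation already carried out in Proposition~\ref{prop1}/Theorem~\ref{t3}: using $[[x_k^i,f],y]$, $[[x_k^i,e],y]$, $[[x_k^i,h],y]$ together with $[sl_2,S]=0$ to obtain
\[
[x_k^i,y_j]=\sum_{r=1}^s\alpha_{j,r}^{\,i}\,x_k^r,\qquad 0\le k\le t,\ 1\le i\le s,
\]
which in your tensor notation is exactly $[v_k\otimes w_a,y]=v_k\otimes\rho(y)w_a$. The paper then simply sets $I_{2,k}:=\langle x_k^1,\dots,x_k^s\rangle$ and declares these to be $s$-dimensional $S$-submodules of $I_2$, without further verification.

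Your write-up differs only in packaging and in being more scrupulous. You invoke the external-tensor structure of irreducibles for $sl_2\oplus S$ up front and identify the $s$-dimensional multiplicity space $\widetilde W$ as an $S$-module; this is a cleaner way to say what the paper's coefficient formula encodes. You also flag two points the paper glosses over: that the spaces have dimension $s$ and lie in $I_2$ only when $\widetilde W$ has no trivial $S$-summand (your $I_{2,k+1}=v_k\otimes\rho(S)\widetilde W$ versus the paper's full $v_k\otimes\widetilde W$), and that the resulting sum is $B\cap I_2$, which coincides with $I_1\cap I_2$ only after restricting to the $(t{+}1)$-isotypic block. These caveats are refinements rather than gaps; the computational core matches the paper exactly.
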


\begin{proof} Let $L$ satisfies the condition of theorem.
Without loss of generality we can assume that
$$dim I_{1,1} = dim
I_{1,2} = \dots =dim I_{1,s} =t+1.$$

Analogously as in the proof of
Proposition \ref{prop1} considering the Leibniz identities for the products
$$[[x^1_k, f],y_j] = [x_k^1, [f,
y_j]] + [[x_k^1, y_j], f],$$
$$[[x^1_k, e],y_j] = [x_k^1, [e,
y_j]] + [[x_k^1, y_j], e],$$
$$[[x^1_k, h],y_j] = [x_k^1, [h,
y_j]] + [[x_k^1, y_j], h],$$ we obtain
$$[x_k^i, y_j] = \sum\limits_{r=1}^s \alpha_{j,r}^ix_k^r,$$
where $0\leq k \leq t, \ 1\leq i \leq s, \ 1\leq j \leq m.$

From these products it is not difficult to see that
$$I_{2,j} = \{x_j^1, x_j^2, \dots, x_j^s\}\ (0 \leq j \leq t)$$
are $s$-dimensional submodules of $I_2$ over $S.$
\end{proof}

Finally, we remark that a semisimple Leibniz algebra is decomposed into a direct sum of Lie-simple ones if and only if $I_p\cap I_q =\{0\}$ for any $p \neq q$ (in denotations of Lemma \ref{l1}).

\end{document}